\documentclass[12pt]{amsart}
\usepackage[a4paper, total={6in, 9in}]{geometry}
\usepackage{amsthm}
\usepackage{amsmath}
\usepackage{etoolbox}
\usepackage{amsfonts}
\usepackage[shortlabels]{enumitem}
\usepackage{amssymb}
\usepackage[all,cmtip]{xy}
\newtheorem{thm}{Theorem}[section]

\newtheorem{cor}[thm]{Corollary}

\newtheorem{remark}[thm]{Remark}
\newtheorem{lemma}[thm]{Lemma}
\newtheorem{prop}[thm]{Proposition}

\newtheorem{ex}[thm]{Example}
\newtheorem{defn}[thm]{Definition}

\renewcommand{\span}{\mathrm{span}}

\newcommand{\bb}[1]{\mathbb{#1}}
\newcommand{\cl}[1]{\mathcal{#1}}

\newcounter{egcounter}
\begin{document}

\title[A GKZ-type Theorem for $H^p_{\alpha, \beta}$ spaces]{A Gleason--Kahane--\.Zelazko Theorem for $H^p_{\alpha, \beta}$ spaces}

\author{Jaikishan}
\address{Department Of Mathematics\\
      School of Natural Sciences\\
      Shiv Nadar Institution of Eminence\\
  Gautam Buddha Nagar\\
         Uttar Pradesh 201314, India}
\email{jk301@snu.edu.in}

\author{Sneh Lata}
\address{Department Of Mathematics\\
       School of Natural Sciences\\
      Shiv Nadar Institution of Eminence\\
  Gautam Buddha Nagar\\
         Uttar Pradesh 201314, India}
\email{sneh.lata@snu.edu.in}

\author{ Dinesh Singh}
\address{Centre For Digital Sciences\\
      O. P. Jindal Global University\\
   Sonipat\\
        Haryana 131001, India}
\email{dineshsingh1@gmail.com}

\subjclass[2020]{30H10, 47B33, 47B38}
\keywords{Gleason–Kahane–\.Zelazko theorem, Hardy spaces, outer function, cyclic function, multiplicative linear functional, multiplier algebra}
\begin{abstract}
We study two entities that have proved to be of interest and importance in their own right viz. the $H^p_{\alpha, \beta}$ spaces and the classical Gleason–Kahane–Żelazko (GKZ) theorem. We establish a GKZ-type theorem on the $H^p_{\alpha, \beta}$ spaces by identifying a natural class of functions serving as the counterpart of invertible elements, and proving that every continuous linear functional nonvanishing on this class is a point evaluation. As an application, we characterize weighted composition operators on these spaces. It must be noted that the $H^p_{\alpha, \beta}$ spaces do not possess the various structural advantages of the classical Hardy spaces where the GKZ theorem already exists and thus we have had to modify and establish methods that rely on suitable extensions and refinements of the known techniques. Additionally, we provide examples that illustrate the natural class of functions arising in our GKZ-type theorem and demonstrate the sharpness of our characterization of weighted composition operators under the assumption of surjectivity.
\end{abstract}

\maketitle 

\section{Introduction} 
The Gleason–Kahane–\.Zelazko (GKZ) theorem is a classical result in the theory of Banach algebras characterizing multiplicative linear functionals on Banach algebras  through their behavior on invertible elements \cite{Gleason, KZ, zelazko}. We recall its statement for complex unital Banach algebras.

\begin{thm}(GKZ theorem)
A linear functional $\varphi$ on a complex unital Banach algebra $\mathcal{A}$ is multiplicative if and only if $\varphi(e) = 1$ and $\varphi(x) \neq 0$ for every invertible element $x$ in $\mathcal{A}$.
 \end{thm}

In recent years, several analogs of the GKZ theorem have been developed for function spaces that do not necessarily possess an algebraic structure. In 2015, Mashreghi and Ransford \cite{MR2015} established an influential GKZ-type result for Hardy spaces $H^p$ over the open unit disc $\bb D$ for the entire range $0<p\le \infty$. They also obtained an analogous result for a broad class of Banach spaces of analytic functions on $\bb D$, encompassing some well-known examples such as Bergman spaces, weighted Dirichlet spaces, and the disc algebra. Later, together with their collaborators, they proved another GKZ-type theorem for weighted Dirichlet spaces in \cite{MR2018} and subsequently extended these results to the class of reproducing kernel Hilbert spaces with normalized complete Pick kernels in \cite{CHMR}. In each of these works, the corresponding GKZ-type results were further applied to characterize weighted composition operators on the underlying spaces.

In 2017, motivated by a question arising in geophysical imaging, Kou and Liu \cite{kou} characterized bounded linear operators on Hardy spaces $H^p$ for $1<p<\infty$ that preserve outer functions as weighted composition operators. As an intermediate step, they also established a GKZ-type theorem for these Hardy spaces. Building on the work of \cite{kou}, Sampat derived analogous results in 2021 for spaces of analytic functions in several variables \cite{Sam}.  

The nonvanishing hypothesis on invertible elements in the GKZ theorem is pivotal in establishing the multiplicativity of the functional. When working with function spaces, however, the notion of invertibility is no longer directly applicable in the same sense. Consequently, in developing GKZ-type results for function spaces, one is naturally led to identify an appropriate class of functions that serves the role of invertible elements.

In the classical Hardy space setting \cite{MR2015}, this role is played by outer functions, whereas in weighted Dirichlet spaces it is played by nowhere-vanishing functions \cite{MR2018}, and in the setting of reproducing kernel Hilbert spaces with normalized complete Pick kernels by cyclic functions \cite{CHMR}. Note that the Hardy space $H^2$ is itself a reproducing kernel Hilbert space with normalized complete Pick kernel, and in this case, cyclic functions coincide with outer functions.

In \cite{kou}, the authors also worked with outer functions, though not with the entire class; instead, they assumed the linear functional to be non-zero only on the subclass 
$\{e^{zw}: w\in \bb C\}$. We further note that the GKZ-type theorems for Hardy spaces and, more generally, for reproducing kernel Hilbert spaces established in \cite{CHMR, MR2015, MR2018} do not assume continuity of the linear functional; rather, continuity follows automatically from the hypotheses. In contrast, the works \cite{kou, Sam} assume continuity of the linear functional among their hypotheses.

In \cite{JLS}, the authors of the present paper developed a substantially broader framework for GKZ-type theorems by working with vector spaces of functions on a fixed open disc in the complex plane that are equipped with a topology, without assuming any compatibility between the topology and the underlying algebraic structure. In particular, unlike the earlier approaches discussed above, no Banach space structure was required. Furthermore, the functions under consideration were not assumed to be analytic. The nonvanishing hypothesis on the linear functional was also substantially weakened: instead of assuming nonvanishing on the entire class of outer functions, it was required only on the much smaller class consisting of powers of linear outer polynomials. The resulting framework not only recovers the spaces covered by the GKZ-type theorem \cite[Theorem 3.1]{MR2015}, but also applies to several additional well-known spaces. The theorem was then applied to characterize 
weighted composition operators among continuous linear operators whose images of powers of linear outer polynomials are nowhere-vanishing.  

The authors further extended these investigations in \cite{JLS1} to vector-valued functions in several variables, where analogous structural results and corresponding applications to weighted composition operators were obtained. We also note that, similarly to \cite{kou, Sam}, continuity of the linear functional was assumed in these works. Examples were further provided to demonstrate the necessity of the continuity assumption for obtaining the desired conclusions in the corresponding settings.

The aim of the present paper is to develop this circle of ideas in the setting of $H^p_{\alpha,\beta}$ spaces, which arise as closed subspaces of the classical Hardy spaces $H^p$. These subspaces were introduced in \cite{DPRS} for the case $p=2$ in connection with a constrained Nevanlinna--Pick interpolation problem and have since been 
studied from a variety of perspectives. In particular, many classical Hardy space results continue to admit analogs for these subspaces; see, for example, \cite{ABS, BL, Sus}. 
These developments make the $H^p_{\alpha,\beta}$ spaces a natural framework for investigating a GKZ-type result and weighted composition operators.  
 
For $\alpha,\beta\in\bb{C}$ with $|\alpha|^2+|\beta|^2=1$ and $1\le p<\infty$, we define 
$$
H^p_{\alpha,\beta}=\span{\{\alpha+\beta z, z^2H^p\}}. 
$$
Indeed, 
\begin{equation*}
H^p_{\alpha,\beta}
= \span\{\alpha+\beta z\} \oplus z^{2}H^{p},
\end{equation*}
where the direct sum is an orthogonal direct sum for $p=2$; otherwise, an algebraic direct sum. It can be readily verified that $H^p_{\alpha,\beta}$ is a closed subspace of 
$H^p$, since multiplication with $z^{2}$ is an isometry on $H^{p}$. Henceforth, whenever we refer to the space $H^p_{\alpha, \beta},$ it will be understood that $1\le p<\infty$ and that the scalars $\alpha,\beta\in\bb C$ satisfy $|\alpha|^2+|\beta|^2=1$ with $\alpha\neq 0$.

Although $H^p_{\alpha,\beta}$ spaces, being closed subspaces, inherit several features of the Hardy spaces, they also exhibit important structural differences. For instance, the function $1$ belongs to $H^p_{\alpha,\beta}$ only when $\beta=0$, whereas $z\in H^p_{\alpha,\beta}$ only when $\alpha=0$. Throughout this paper, we work under the standing assumption $\alpha\neq 0$ and do not isolate the special case $\beta=0$; consequently, neither of the distinguished functions $1$ and $z$ is available in our setting.  
  
Furthermore, one of the most fundamental structural features of the Hardy spaces is the inner--outer factorization. More precisely, every non-zero function $f\in H^p$ $(p>0)$ admits a unique factorization of the form 
$$
f(z)=I(z)O(z),
$$ 
where $I$ is an inner function and $O$ is an outer function in $H^p$. In particular, this factorization expresses every function as a product of two components: one belonging to the algebra $H^\infty$ of bounded analytic functions on $\bb D$, the multiplier algebra of $H^p,$ and the other being an outer function. 

Since $H^p_{\alpha,\beta}\subseteq H^p$, every function in $H^p_{\alpha,\beta}$ admits a factorization as the product of an $H^\infty$ function and an outer function in the Hardy space. However, it is presently not known whether these spaces possess an intrinsic analog of this factorization. More specifically, since the multiplier algebra of $H^p_{\alpha,\beta}$ is $H^\infty_1$ (Theorem \ref{multiplier}) and $H^\infty_1\subsetneq H^\infty$, it is not known whether every function in $H^p_{\alpha,\beta}$ can be expressed as the product of a function in $H^\infty_1$ and a cyclic function in $H^p_{\alpha,\beta}$. As we shall see in Definition \ref{def-cyclic}, cyclic functions serve as the natural counterpart of outer functions in the Hardy spaces.  
 
The existing arguments in the literature mentioned above for GKZ-type results depend heavily on the simultaneous presence of the functions $1$ and $z$, as 
well as on the availabity of appropriate factorizations. We note that reproducing kernel Hilbert spaces with complete Pick kernels also admit an analogous factorization, in which functions from the multiplier algebra play the role of bounded analytic functions, while cyclic functions replace outer functions. The absence of these features in the spaces $H^p_{\alpha,\beta}$ therefore introduces substantial additional difficulties. Consequently, 
the proofs of our analogous results–Theorems \ref{M}, \ref{weighted1}, and \ref{weighted2}–require significant modifications of the approaches employed in the above-cited works.
Nevertheless, as we show in Section \ref{gkz}, the resulting characterizations closely parallel those obtained in the classical Hardy space setting.  

We conclude the introduction with a brief outline of the paper. Section \ref{Prelim} contains some preliminary and auxiliary results. In Section \ref{mlf}, we provides, under a mild additional assumption, a complete and explicit characterization of the multiplicative linear functionals on $H^\infty_1$, the multiplier algebra of the $H^p_{\alpha, \beta}$ spaces. 
Section \ref{gkz} contains the main results of the paper. Theorem \ref{M} is a GKZ-type result for $H^p_{\alpha, \beta}$ spaces, and Theorems \ref{weighted1} and \ref{weighted2} 
characterize weighted composition operators on these spaces for different codomains. 

\section{Preliminaries and Auxiliary Results}\label{Prelim}
For $1\le p<\infty$, the Hardy space $H^p$ consists of all complex-valued analytic functions on the open unit disc $\bb D$ such that 
$$
\sup_{0\le r<1} \int_{0}^{2\pi} |f(re^{i\theta})|^p\, d\theta <\infty.
$$
Equipped with norm
$$
||f||_p= \left(\sup_{0\le r<1} \int_{0}^{2\pi} |f(re^{i\theta})|^p\, d\theta\right)^{1/p},
$$
$H^p$ is a Banach space.
Further, the Hardy space $H^\infty$ is the Banach algebra of all bounded analytic functions on $\bb D$ endowed with the norm
$$
||f||_\infty = \sup_{z\in \bb D}|f(z)|.
$$

For the reader's convenience, we recall the definition of $H^p_{\alpha,\beta}$ spaces. For $\alpha,\beta\in\bb C$ satisfying $|\alpha|^2+|\beta|^2=1$ and $1\le p<\infty$, the space $H^p_{\alpha,\beta}$ is the closed subspace of $H^p$ given by
$$
H^p_{\alpha,\beta}=\operatorname{span}\{\alpha+\beta z,\; z^2H^p\}.
$$

As stated in the introduction, throughout this paper, whenever the spaces $H^p_{\alpha,\beta}$ are considered, we assume that $1\le p<\infty$ and that the scalars $\alpha,\beta\in\bb C$ satisfy $|\alpha|^2+|\beta|^2=1$ with $\alpha\neq 0$. The following observation provides a necessary and sufficient condition for a function in $H^p$ to belong to $H^p_{\alpha,\beta}$.  

\begin{lemma}\label{Cha}\cite[Lemma 2.1]{ABS}
A function $f\in H^p$ belongs to $ H^{p} _{\alpha,\beta}$ if and only if it satisfies $f(0)\beta=f^\prime(0)\alpha$.
\end{lemma}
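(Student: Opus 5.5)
Both inclusions come from comparing Taylor coefficients at the origin. The key observation is that $z^2H^p$ is exactly the set of $g\in H^p$ with $g(0)=g'(0)=0$, since $g$ is analytic on $\bb D$. The linear functional $\Lambda(f)=\beta f(0)-\alpha f'(0)$ on $H^p$ therefore vanishes on $z^2H^p$. It also vanishes on $\alpha+\beta z$, because $\beta\alpha-\alpha\beta=0$. Hence it vanishes on their span. This gives the necessity direction: every $f\in H^p_{\alpha,\beta}$ satisfies $f(0)\beta=f'(0)\alpha$.

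For sufficiency, I take $f\in H^p$ with $f(0)\beta=f'(0)\alpha$ and set $c=f(0)/\alpha$, which is legitimate because $\alpha\neq 0$ by the standing assumption. The hypothesis gives $f'(0)=\beta f(0)/\alpha=c\beta$. So $g:=f-c(\alpha+\beta z)$ satisfies $g(0)=f(0)-c\alpha=0$ and $g'(0)=f'(0)-c\beta=0$.

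It remains to check that $g\in z^2H^p$. Since $g$ has a zero of order at least two at the origin, $h=g/z^2$ is analytic on $\bb D$. To see $h\in H^p$, I use that $|z^2|=r^2$ on $|z|=r$. This gives $\int_0^{2\pi}|h(re^{i\theta})|^p\,d\theta=r^{-2p}\int_0^{2\pi}|g(re^{i\theta})|^p\,d\theta$. For $r\ge 1/2$ this is at most $4^p\|g\|_p^p$. For $r<1/2$, the integral means of $h$ increase with $r$ because $|h|^p$ is subharmonic. So the supremum over $r$ is finite and $h\in H^p$. Then $f=c(\alpha+\beta z)+z^2h\in H^p_{\alpha,\beta}$.

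The only step needing any care is this division by $z^2$ staying in $H^p$; it is the standard fact that dividing out inner factors preserves $H^p$ membership. The role of the hypothesis $\alpha\neq0$ is only to solve for $c$. If $\alpha=0$, the condition would instead force $f(0)=0$ and the same argument works with $c=f'(0)/\beta$.
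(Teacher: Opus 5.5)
Your proof is correct and complete. The paper does not prove this lemma; it quotes it from \cite[Lemma 2.1]{ABS}, so there is no in-paper argument to compare against.

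Your argument is the standard one, in the same coefficient-matching spirit the paper uses for Theorem~\ref{multiplier}. For necessity, the functional $f\mapsto \beta f(0)-\alpha f'(0)$ kills both $\alpha+\beta z$ and $z^2H^p$. For sufficiency, subtracting $\frac{f(0)}{\alpha}(\alpha+\beta z)$ leaves a function with a double zero at $0$, and dividing it by $z^2$ stays inside $H^p$.

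Two small remarks:
\begin{enumerate}
\item For $r<1/2$ you do not need the monotonicity of integral means, since $h=g/z^2$ is continuous, hence bounded, on the closed disc of radius $1/2$.
\item Point evaluation of $f$ and of $f'$ at $0$ are bounded on $H^p$. So your lemma exhibits $H^p_{\alpha,\beta}$ as the kernel of a nonzero bounded functional (nonzero because it sends $z$ to $-\alpha\neq 0$). This gives closedness and codimension one at once, both of which the paper uses, e.g.\ in Theorem~\ref{cyclic}. It also shows the conclusion is unchanged if $\span$ is read as a closed span.
\end{enumerate}
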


One of the primary objectives of this paper is to characterize multiplicative linear functionals on $H^p_{\alpha, \beta}$ spaces. It is well established in the literature \cite{CHMR, MR2015, MR2018} that the multiplier algebras of the Hardy spaces $H^p$, $1\le p<\infty$, and of reproducing kernel Hilbert spaces play a central role in the derivation of the corresponding GKZ-type results. This naturally leads us to consider the multiplier algebras of $H^p_{\alpha,\beta}$ spaces.

Given a vector space $X$ of complex-valued functions on $\bb D$, a complex-valued function $h$ defined on $\bb D$ is called a multiplier of $X$ if $hg$ belongs to $X$ whenever $g$ belongs to $X$. Let $\cl M(X)$ denote the collection of all multipliers of $X$. It is straightforward to verify that $\cl M(X)$ is an algebra under 
pointwise operations; this algebra is called the multiplier algebra of $X$. For the Hardy spaces $H^p$, $0< p\le \infty$, the multiplier algebra is the Banach algebra $H^\infty.$

 In \cite{DPRS}, the authors showed that the multiplier algebra of $H^2_{\alpha, \beta}$ is the closed subalgebra 
$$H^\infty_{1}:=\{f\in H^\infty : f^\prime(0)=0\}
$$ of $H^\infty.$ We show below that this conclusion remains valid for the entire range $1\le p< \infty$. It is straightforward to see that  $H^\infty_1\subseteq \cl M(H^p_{\alpha, \beta})$ for $1\le p<\infty$; establishing the reverse inclusion is the content of the following result.

\begin{thm}\label{multiplier}
For each $1\le p<\infty$, $\mathcal{M}(H^{p} _{\alpha,\beta})=H^\infty_{1}$.
\end{thm}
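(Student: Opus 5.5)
Since $H^\infty_1\subseteq \cl M(H^p_{\alpha,\beta})$ is already noted, it remains to take a multiplier $h$ of $H^p_{\alpha,\beta}$ and show that $h$ is analytic, bounded, and satisfies $h'(0)=0$. The plan has three steps: identify $h$ as analytic from a single product, show $h'(0)=0$ from the constraint in Lemma \ref{Cha}, and then get boundedness by reducing to the classical fact $\cl M(H^p)=H^\infty$.

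\textbf{Analyticity.} Set $g_0=\alpha+\beta z$, which lies in $H^p_{\alpha,\beta}$, and let $F=hg_0$. Then $F\in H^p_{\alpha,\beta}\subseteq H^p$. Since $|\alpha|^2+|\beta|^2=1$ and $\alpha\ne0$, we have $|\beta|<1$, so $g_0$ does not vanish on $\overline{\bb D}$ and $1/g_0\in H^\infty$. Hence $h=F/g_0$ is analytic on $\bb D$ and lies in $H^p$.

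\textbf{$h'(0)=0$.} Apply Lemma \ref{Cha} to $F=hg_0$. We have $F(0)=h(0)\alpha$ and $F'(0)=h'(0)\alpha+h(0)\beta$. The condition $F(0)\beta=F'(0)\alpha$ becomes $h(0)\alpha\beta=h'(0)\alpha^2+h(0)\alpha\beta$. This gives $h'(0)\alpha^2=0$, and since $\alpha\neq0$, $h'(0)=0$.

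\textbf{Boundedness.} This is the main step. For every $f\in H^p$, the function $z^2f$ lies in $H^p_{\alpha,\beta}$, so $z^2hf\in H^p_{\alpha,\beta}\subseteq H^p$. Dividing by $z^2$ preserves $H^p$ membership and norm: the quotient is analytic because $z^2hf$ vanishes to order at least $2$ at $0$, and $|z^2|=1$ on the circle. Hence $hf\in H^p$ for every $f\in H^p$, so $h\in\cl M(H^p)=H^\infty$.

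If one prefers not to quote that classical fact, here is a self-contained route. The operator $M_h$ is closed as a map on $H^p_{\alpha,\beta}$ because point evaluations are continuous, so it is bounded by the closed graph theorem. Then $\|h^nz^2\|_p\le\|M_h\|^n\|z^2\|_p$. Taking $n$-th roots and using $\|\phi\|_p^p\ge\int|\phi|^p\,d\theta$ on the boundary, letting $n\to\infty$ gives $\|h\|_{L^\infty(\bb T)}\le\|M_h\|$. Because $h\in H^p$, this yields $h\in H^\infty$.

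The only point needing care is the boundedness step, and in particular ensuring the reduction to all of $H^p$ is legitimate. The division by $z^2$ handles this cleanly, so I expect no real obstacle. Combining the three steps gives $h\in H^\infty_1$, proving the theorem.
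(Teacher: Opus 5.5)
Your analyticity step contains a false inference. From $|\alpha|^2+|\beta|^2=1$ and $\alpha\neq 0$ you only get $|\beta|<1$. However, the zero of $g_0=\alpha+\beta z$ is at $z_0=-\alpha/\beta$, and this lies outside $\overline{\mathbb D}$ only when $|\beta|<|\alpha|$. For example, $\alpha=1/\sqrt5$, $\beta=2/\sqrt5$ gives $z_0=-1/2\in\mathbb D$, and $\alpha=\beta=1/\sqrt2$ gives $z_0=-1\in\mathbb T$.

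The theorem must cover the range $|\beta|\ge|\alpha|$, and there $1/g_0\notin H^\infty$. So neither ``$h$ is analytic on $\mathbb D$'' nor ``$h\in H^p$'' follows from your first step. Moreover, when $z_0\in\mathbb D$, the single relation $hg_0=F$ cannot pin down $h$ at $z_0$ at all. It only forces $F(z_0)=0$ and leaves the value $h(z_0)$ completely free, so no argument using $g_0$ alone can give analyticity of $h$ at $z_0$.

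The missing ingredient is a second test function, which is how the paper proceeds. Since $z^2\in H^p_{\alpha,\beta}$, you have $z^2h=G\in H^p_{\alpha,\beta}$. Write $G=c(\alpha+\beta z)+z^2f$ with $f\in H^p$; evaluating at $0$ gives $c\alpha=0$, so $c=0$. Hence $h$ agrees with $f$ on $\mathbb D\setminus\{0\}$. Use $g_0$ only near $0$, where $g_0(0)=\alpha\neq0$, to see that $h=F/g_0$ is analytic at $0$. It follows that $h(0)=f(0)$, so $h=f$ on all of $\mathbb D$, and in particular $h\in H^p$.

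After this repair the rest of your argument goes through. Your computation of $h'(0)=0$ via Lemma \ref{Cha} is the paper's coefficient comparison, and your boundedness step is exactly the paper's argument. Your closed-graph and $n$-th-root argument is a correct self-contained replacement for quoting $\cl M(H^p)=H^\infty$.
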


\begin{proof} As noted above, we already have that $H^\infty_1 \subseteq \cl M(H^p_{\alpha,\beta})$. To prove the reverse inclusion, let $h:\bb{D}\to \bb{C}$ be a multiplier of $H^p_{\alpha,\beta}$. Then $z^2h\in H^p_{\alpha,\beta}$, which means $z^2h(z)=c(\alpha+\beta z)+z^2f(z)$ for some $f\in H^p$ and a scalar $c$. This implies that $c=0$, which further implies that $z^2h(z)=z^2f(z)$; therefore, $h$ is analytic on $\bb{D}\setminus\{0\}$. Furthermore, since $h(\alpha+\beta z)\in H^{p} _{\alpha,\beta}$, there exists a $g\in H^p_{\alpha, \beta}$ such that $h(z)=g(z)/(\alpha+\beta z)$ in some neighborhood of $0$. This yields that $h$ is analytic at $0$; therefore $h$ is analytic in $\bb D$. Let $h(z)=\sum_{n=0}^{\infty}a_nz^n$ for $z\in \bb D$. We must have $h(z)(\alpha+\beta z)=c_1(\alpha+\beta z)+z^2 g_0(z)$ for some $g_0 \in H^p$. By comparing the coefficients, we get
\begin{equation*}
a_0\alpha=c_1\alpha, \qquad a_0\beta+\alpha a_1=c_1\beta,
\end{equation*}
from where it follows that $a_1=0$. Hence, $h^{\prime}(0)=0$.  

Lastly, we shall show that $h$ is bounded. Let $f\in H^p$. Then, $z^2fh\in H^{p} _{\alpha,\beta}$, which means $z^2fh=z^2g_1$ for some $g_1\in H^p$. This, using the identity theorem, implies that $fh=g_1.$ Hence, 
$hH^{p}\subset H^{p}$, which establishes that $h\in H^\infty.$ This completes the proof.  
\end{proof}	

As discussed in the introduction, various analogs of the GKZ theorem for function spaces replace invertible elements by suitable classes of functions, such as outer functions in Hardy spaces, nowhere-vanishing functions in weighted Dirichlet space, and cyclic functions in reproducing kernel Hilbert spaces with normalized complete Pick kernels. Motivated by these developments, and in order to derive an analog of the GKZ theorem for $H^p_{\alpha,\beta}$ space, it is therefore necessary first to identify a suitable class of functions that can play the same structural role as the invertible elements in the classical GKZ setting. Before introducing this class, we recall some definitions. 

A function $f$ in $H^p$ is said to be an outer function if $\span\{fz^n:n\ge 0\}$ is dense in $H^p$, which is equivalent to saying that $fH^\infty$ is dense in $H^p$. If $\cl H$ 
is an RKHS and $\cl M(\cl H)$ is its multiplier algebra, then $h\in \cl H$ is called cyclic if $h\cl M(\cl H)$ is dense in $\cl H$. Since, $\cl M(H^2)=H^\infty$, therefore $h\in H^2$ 
is cyclic if and only it is outer. Taking inspiration from these definitions, we introduce the following notion for $H^p_{\alpha,\beta}$ spaces.

\begin{defn}\label{def-cyclic}
A function $f$ in $H^p_{\alpha,\beta}$ is said to be $cyclic$ if $\overline{fH^{\infty}_1}=H^p_{\alpha,\beta}$.
\end{defn}

Observe that a cyclic function in $H^p_{\alpha,\beta}$ cannot vanish at any point in $\bb{D}$. Indeed, suppose that $f\in H^p_{\alpha, \beta}$ is a cyclic and vanishes 
 at some point $z_0\in \bb D$. Then, by the definition of cyclicity, every function in $H^p_{\alpha, \beta}$ must vanish at $z_0.$ In particular, $z_0^2=0$, which implies that $z_0=0$. Consequently, $\alpha=\alpha+\beta z_0=0,$ contradicting the assumption that $\alpha\ne 0.$  

The results of Section \ref{gkz} show that cyclic functions serve as the appropriate substitute for invertible elements in the setting of $H^p_{\alpha,\beta}$ spaces.
Since these functions play a pivotal in our work, it is important to determine whether such functions exist at all. The following result provides an effective characterization of cyclic functions in these spaces, which will be useful in constructing examples of such functions.

\begin{thm}\label{cyclic}
A function $f\in H^p_{\alpha,\beta}$ is cyclic if and only if it is outer in $H^p$.
\end{thm}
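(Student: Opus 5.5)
Both directions reduce the cyclicity of $f$ in $H^p_{\alpha,\beta}$ to the density of $fH^\infty$ in $H^p$.

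\emph{Cyclic implies outer.} Suppose $f\in H^p_{\alpha,\beta}$ is cyclic. Then $\overline{fH^\infty_1}=H^p_{\alpha,\beta}\supseteq z^2H^p$. Since $H^\infty_1\subseteq H^\infty$, the closure $\overline{fH^\infty}$ is a closed $z$-invariant subspace of $H^p$ containing $z^2H^p$.

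Write $f=IO$ with $I$ inner and $O$ outer. Every element of $\overline{fH^\infty}$ is divisible by $I$ (this is Beurling's theorem for $H^p$; concretely, $\overline{fH^\infty}=IH^p$). So $I$ divides $z^2$, which forces $I=cz^k$ with $k\le 2$ and $|c|=1$.

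Because $f$ is cyclic, the remark after Definition \ref{def-cyclic} shows that $f$ has no zeros in $\bb D$, so $f(0)\neq0$. Hence $k=0$, $I$ is a unimodular constant, and $f$ is outer.

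\emph{Outer implies cyclic.} Let $f\in H^p_{\alpha,\beta}$ be outer. Then $f(0)\ne0$, and $fH^\infty$ is dense in $H^p$. Let $X=\overline{fH^\infty_1}\subseteq H^p_{\alpha,\beta}$. Since $H^p_{\alpha,\beta}=\span\{\alpha+\beta z\}\oplus z^2H^p$, it suffices to show two things: $z^2H^p\subseteq X$, and $X$ contains some element $g$ with $g(0)\neq 0$. The second one implies the first's complement: such a $g$ differs from $(g(0)/\alpha)(\alpha+\beta z)$ by an element of $z^2H^p$ (Lemma \ref{Cha}).

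\emph{The elements $z^2 fh$.} Note that $z^2H^\infty\subseteq H^\infty_1$. Therefore $fz^2H^\infty\subseteq fH^\infty_1$, and so
\[
X\supseteq \overline{z^2 fH^\infty}=z^2\,\overline{fH^\infty}=z^2H^p,
\]
using that multiplication by $z^2$ is an isometry on $H^p$. Also $f$ itself lies in $X$ (take the multiplier $1\in H^\infty_1$), and $f(0)\ne 0$. So $f$ serves as the required $g$, and $X=H^p_{\alpha,\beta}$.

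\emph{Main obstacle.} The only delicate point is the first direction: justifying that $\overline{fH^\infty}\supseteq z^2H^p$ forces the inner factor of $f$ to divide $z^2$. I would handle this directly rather than through Beurling's theorem. If $f=IO$, every element $fh$ with $h\in H^\infty$ has the form $I\cdot(Oh)$, and the $H^p$-limits of such functions remain multiples of $I$. This holds because $\{Ig: g\in H^p\}$ is closed, since multiplication by $I$ is an isometry on $H^p$. Thus $z^2=Ig$ for some $g\in H^p$, so $I$ divides $z^2$, and $f(0)\ne0$ rules out any zero at the origin.
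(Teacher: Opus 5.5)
Your proof is correct. The direction outer $\Rightarrow$ cyclic is essentially the paper's: you split $H^\infty_1=\mathbb{C}\oplus z^2H^\infty$, get $z^2H^p=z^2\overline{fH^\infty}\subseteq\overline{fH^\infty_1}$, and use $f(0)\neq0$ to recover $\alpha+\beta z$. You only need the inclusion, which sidesteps the paper's closure identity $\overline{fH^\infty_1}=\span\{f\}\oplus z^2H^p$.

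For cyclic $\Rightarrow$ outer you take a different route. The paper never factors $f$. From $H^p_{\alpha,\beta}\subseteq\overline{fH^\infty}\subseteq H^p$ and codimension one, $\overline{fH^\infty}$ must be one of the two ends. It cannot be $H^p_{\alpha,\beta}$, since then $zf\in H^p_{\alpha,\beta}$ would force $f(0)=0$. This lands directly on the density definition of outer using only linear algebra.

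You instead use the inner--outer factorization, the closedness of $IH^p$, and divisibility of $z^2$ by $I$. That is valid: the divisibility step is rigorous once you note that $z^2=Ig$ gives $|g|=1$ a.e. on the circle, so $g$ is inner, and then $I(0)\neq0$ forces $I$ to be constant. It does use more function theory, though, and it ends at the factorization notion of outer. So you are implicitly invoking Beurling's theorem (the equivalence the paper quotes) to get back to $\overline{fH^\infty}=H^p$.

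In exchange, your argument does not rely on the codimension count. It shows that any $f$ with $f(0)\neq0$ and $\overline{fH^\infty}\supseteq z^nH^p$ is outer, for every $n$.
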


\begin{proof}
Let $f\in H^p_{\alpha,\beta}$ be cyclic. Consider the space $\cl M:=\overline{fH^\infty}$. Since ${fH^\infty_1}\subset{fH^\infty}$, we have 
\begin{equation}
H^p_{\alpha,\beta}\subset \mathcal{M}\subset H^p.
\end{equation} 

As the co-dimension of $H^p_{\alpha, \beta}$ in $H^p$ is one, we conclude that either $\mathcal{M}=H^p_{\alpha,\beta}$ or $\mathcal{M}=H^p$. 
If $\cl M=H^p_{\alpha,\beta}$, then $zf\in H^p_{\alpha,\beta}$, which implies that $zf=z^2g$ for some $g\in H^p$. But this would mean $f(0)=0$, a contradiction to the fact that 
cyclic functions in $H^p_{\alpha, \beta}$ can't vanish at any point. Hence, $\overline{fH^\infty}=\mathcal{M}=H^p$, which establishes that $f$ is outer.  
				 
Now suppose $f$ is outer and $f\in H^p_{\alpha,\beta}$. First we decompose $H^\infty_1$ as 
$$
H^\infty_1=\bb{C}\oplus z^2H^\infty \ \ (\rm algebraic \ direct \ sum).
$$ 
Then,  
\begin{equation*}
\overline{fH^\infty_1}=\overline{f(\bb{C}\oplus z^2H^\infty)}= span\{f\}\oplus \overline{z^2fH^\infty} = {span\{f\}}\oplus{z^2H^p},
\end{equation*}
since $f$ is outer in $H^p$. Thus, $z^2H^p\subseteq \overline{fH^\infty_1}.$ Further, as $f\in H^p_{\alpha, \beta}$, we write $f=c(\alpha+\beta z)+z^2f_1$ for some $f_1\in H^p$ and scalar $c.$ However, $f$ being outer can't vanish at any point in $\bb D$, therefore $c$ is non-zero. This yields that 
$\alpha+\beta z\in {span\{f\}}\oplus{z^2H^p}=\overline{fH^\infty_1}$. Hence, $H^p_{\alpha,\beta}=\overline{fH^\infty_1}$, which establishes that $f$ is cyclic.  
\end{proof}	

We can deploy the above characterization to construct an entire family of cyclic functions in $H^p_{\alpha, \beta}$ spaces. 
\begin{ex}
Let $\alpha, \beta\in \bb C$ such that $\alpha\ne 0$ and $|\alpha|^2+|\beta|^2=1$. Define $s(z)=(1+\frac{z}{2})^{\frac{2\beta}{\alpha}}$. It is clear that $s$ is analytic and nowhere-vanishing on the open disc centred at zero with radius $2$. Thus, $s$ and $1/s$ are both in $H^\infty(\bb D)$; therefore $s$ is an outer function. Further, $\beta s(0)=\alpha s^\prime(0)$ yields that $s\in H^p_{\alpha,\beta}$. Thus, $s$ is cyclic in $H^p_{\alpha,\beta}$ for every $1\le p<\infty$.
\end{ex}  

We conclude this section by observing that nonvanishing on cyclic functions is a necessary hypothesis for a GKZ-type theorem on 
$H^p_{\alpha,\beta}$.

\begin{prop} Let $\Lambda$ be a non-zero continuous  linear functional on $H^p_{\alpha,\beta}$ and $c$ be a non-zero scalar such that $\Lambda(fg)=c\Lambda(f)\Lambda(g) $ whenever $f, g\in H^p_{\alpha, \beta}$ with $fg\in H^p_{\alpha,\beta}$. Then $\Lambda$ is non-zero at every cyclic function in $H^p_{\alpha,\beta}$.
\end{prop}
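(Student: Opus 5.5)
Suppose, for contradiction, that $f\in H^p_{\alpha,\beta}$ is cyclic and $\Lambda(f)=0$. The plan is to show that $\Lambda$ then vanishes on the dense set $fH^\infty_1$. By continuity this forces $\Lambda=0$, which contradicts the hypothesis that $\Lambda$ is non-zero. The key point is that for every $h\in H^\infty_1$ we want to write $\Lambda(fh)$ as a product in which $\Lambda(f)$ appears as a factor.

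First, for $h\in H^\infty_1$ we need $h$ to lie in $H^p_{\alpha,\beta}$ in order to apply the multiplicative relation. We have $h\in H^\infty\subset H^p$ and $h'(0)=0$. By Lemma \ref{Cha}, $h\in H^p_{\alpha,\beta}$ if and only if $\beta h(0)=\alpha h'(0)=0$. This holds when $\beta=0$ or $h(0)=0$, but fails in general. So instead of applying the relation to the pair $(f,h)$, we split $h=h(0)+(h-h(0))$.

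The constant part is immediate: $\Lambda(h(0)f)=h(0)\Lambda(f)=0$. For the remainder, set $k=h-h(0)$. Then $k\in H^\infty$, $k(0)=0$ and $k'(0)=0$, so $k\in z^2H^\infty\subset z^2H^p\subset H^p_{\alpha,\beta}$. Moreover $fk\in z^2H^p\subset H^p_{\alpha,\beta}$, since $H^\infty_1$ consists of multipliers of $H^p_{\alpha,\beta}$ (Theorem \ref{multiplier}), or directly because $fk\in z^2H^p$. The hypothesis therefore gives
\[
\Lambda(fk)=c\,\Lambda(f)\Lambda(k)=0 .
\]
Hence $\Lambda(fh)=h(0)\Lambda(f)+\Lambda(fk)=0$ for every $h\in H^\infty_1$.

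Finally, $f$ is cyclic, so $fH^\infty_1$ is dense in $H^p_{\alpha,\beta}$ by Definition \ref{def-cyclic}. Since $\Lambda$ is continuous and vanishes on this dense set, $\Lambda\equiv 0$, which is the desired contradiction. The only delicate step is the observation that a general element of $H^\infty_1$ need not belong to $H^p_{\alpha,\beta}$, which is why we split off the constant term. Nothing else requires real work, and the argument does not even use the assumption $c\neq 0$.
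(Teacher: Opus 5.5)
Your proof is correct, and it is more economical than the paper's. Both arguments use the same observation. A factor from $z^2H^\infty$ lies in $H^p_{\alpha,\beta}$ and multiplies $H^p_{\alpha,\beta}$ into $z^2H^p\subset H^p_{\alpha,\beta}$, so the multiplicative hypothesis applies to it.

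The paper exploits this by multiplying the cyclic function $s$ by $z^kh$ with $k\ge 2$ and $h\in H^\infty_1$. Density then only gives $\Lambda(z^ku)=0$ for all $u\in H^p_{\alpha,\beta}$ and $k\ge 2$. So the paper still has to show that $\Lambda$ annihilates $z^2H^p$, via $\Lambda(z^n)=0$ for $n\ge 2$ and continuity. It then uses $s=\mu(\alpha+\beta z)+z^2g$ with $\mu\neq 0$, i.e.\ $s(0)\neq 0$, to get $\Lambda(\alpha+\beta z)=0$.

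As written, substituting $u=z^j$ only yields $\Lambda(z^n)=0$ for $n\ge 4$. The cases $n=2,3$ need an extra line, for example $\Lambda(z^4)=c\Lambda(z^2)^2$ and $\Lambda(z^6)=c\Lambda(z^3)^2$ with $c\neq 0$, or testing against $u=\alpha+\beta z$.

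Your splitting $h=h(0)+(h-h(0))$ avoids all of this. Because $h'(0)=0$, you have $h-h(0)\in z^2H^\infty$, so $\Lambda$ vanishes on the whole dense set $fH^\infty_1$, and cyclicity plus continuity finish immediately. This bypasses the recovery of $\Lambda(z^n)$ and the nonvanishing of the cyclic function at $0$. As you note, it also does not use the assumption $c\neq 0$.
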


\begin{proof}
Suppose there is a cyclic function $s\in H^p_{\alpha,\beta}$ such that $\Lambda(s)=0$. Then, $\Lambda(sz^kh)=c\Lambda(s)\Lambda(z^kh)=0$ for every $h\in H^\infty_1$ and 
$k\ge 2$. Let $f\in H^p_{\alpha,\beta}$, and let $\{h_n\}$ be a sequence in $H^\infty_1$ such that $sh_n\longrightarrow f$ in $H^p_{\alpha, \beta}$. Consequently, 
$z^ksh_n\longrightarrow z^kf$ in $H^p_{\alpha, \beta}$ for all $k\ge2$. It follows that $\Lambda(z^ksh_n)\longrightarrow \Lambda(z^kf)$; thus, $\Lambda(z^kf)=0$ 
for every $f\in H^p_{\alpha,\beta}$ and $k\ge 2$. In particular, for $f=z^k$, we obtain $\Lambda(z^k)=0$ for all $k\ge 2.$ Let $s=\mu(\alpha+\beta z)+z^2g$ for some constant $\mu\neq 0$ and $g\in H^p$. This yields that $\Lambda(\alpha+\beta z)=0$; therefore, $\Lambda\equiv0$, which is a contradiction. Hence, $\Lambda$ must non-zero at every cyclic function. 
\end{proof}

\section{Multiplicative linear functionals on $H^\infty_1$}\label{mlf}
The main goal of this section is to investigate multiplicative linear functionals (mlfs) on $H^\infty_1$. As in the case of $H^\infty$, a complete description of these functionals is subtle. Nevertheless, Theorem \ref{H_1} provides a complete and explicit characterization of mlfs under a mild additional assumption. This result will play a key role in the proof of Theorem \ref{M}, where such a condition on mlfs arises naturally.  

As a first step, we describe the mlfs on subalgebras of the disc algebra $A(\bb D)$ of the form $gA(\bb D)$, where $g\in A(\bb D).$ Although the following characterization of mlfs on $gA(\bb D)$ follows from the more general result \cite[Theorem 2.1]{Bjo}, the particular structure of these subalgebras allows for a more direct and elementary proof, which we present here. Recall that every mlf on $A(\bb D)$ is given by point evaluation at some point of the closed unit disc $\overline{\mathbb{D}}$. 
 
\begin{prop}\label{ideal-disc}
Let $g\in A(\bb{D})$ and $\Lambda : gA(\mathbb{D}) \to \mathbb{C}$ be a non-zero multiplicative linear functional. Then there exists $w\in \overline{\mathbb{D}}$ such that 
$\Lambda(h)=h(w)$ for all $h$ in $gA(\bb{D})$. 
\end{prop}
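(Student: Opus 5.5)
Since $\Lambda$ is non-zero, we first fix an element $h_0 = g f_0 \in gA(\bb D)$ with $\Lambda(h_0)\neq 0$. In particular $g\neq 0$. The idea is to extend $\Lambda$ to a multiplicative linear functional $\tilde\Lambda$ on all of $A(\bb D)$. We define
$$
\tilde\Lambda(f) := \frac{\Lambda(f h_0)}{\Lambda(h_0)}, \qquad f\in A(\bb D).
$$
This makes sense because $fh_0 = g(ff_0)\in gA(\bb D)$.

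The map $\tilde\Lambda$ is clearly linear, so the first task is to check multiplicativity. For $f_1,f_2\in A(\bb D)$, the product $f_1h_0$ and $f_2h_0$ both lie in $gA(\bb D)$. Multiplicativity of $\Lambda$ then gives
$$
\Lambda(f_1f_2h_0)\Lambda(h_0) = \Lambda(f_1f_2h_0^2) = \Lambda(f_1h_0)\Lambda(f_2h_0),
$$
where the first equality writes $f_1f_2h_0^2 = (f_1f_2h_0)\cdot h_0$. Dividing by $\Lambda(h_0)^2$ yields $\tilde\Lambda(f_1f_2) = \tilde\Lambda(f_1)\tilde\Lambda(f_2)$.

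Next we check that $\tilde\Lambda$ extends $\Lambda$. For $h\in gA(\bb D)$ we have
$$
\Lambda(hh_0) = \Lambda(h)\Lambda(h_0),
$$
so $\tilde\Lambda(h)=\Lambda(h)$. Also $\tilde\Lambda(1)=1$, so $\tilde\Lambda$ is non-zero.

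A non-zero multiplicative linear functional on the Banach algebra $A(\bb D)$ is automatically continuous. By the recalled fact, it is therefore point evaluation at some $w\in\overline{\bb D}$. Hence $\Lambda(h)=\tilde\Lambda(h)=h(w)$ for all $h\in gA(\bb D)$.

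The only step needing care is the well-definedness of the extension, namely finding $h_0$ with $\Lambda(h_0)\neq0$ and verifying that all the products used stay inside the ideal $gA(\bb D)$. Both points are immediate because $gA(\bb D)$ is an ideal of $A(\bb D)$. No continuity assumption on $\Lambda$ is needed, since continuity comes from the extension to $A(\bb D)$.
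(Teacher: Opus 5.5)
Your proof is correct and follows essentially the paper's route: extend $\Lambda$ to a non-zero multiplicative functional $\tilde\Lambda$ on $A(\mathbb{D})$ by dividing by the value at an element where $\Lambda$ is non-zero, then use the fact that every such functional on $A(\mathbb{D})$ is evaluation at a point of $\overline{\mathbb{D}}$. The one difference is that you divide by $\Lambda(h_0)$ for an arbitrary $h_0$ with $\Lambda(h_0)\neq 0$ and get $\tilde\Lambda=\Lambda$ on $gA(\mathbb{D})$ directly from $\Lambda(hh_0)=\Lambda(h)\Lambda(h_0)$, whereas the paper divides by $\Lambda(g)$, which costs it two small extra steps (first showing $\Lambda(g)\neq 0$, then $\Lambda(g)=g(w)$).
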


\begin{proof}
Since $\Lambda$ is non-zero, we have $\Lambda(gh_0)\neq0$ for some $h_0\in A(\bb{D})$. This implies that $\Lambda((gh_0)^2)=(\Lambda(gh_0))^2\ne0$. 
However, $\Lambda((gh_0)^2)=\Lambda(g)\Lambda(gh_0^2)$, so $\Lambda(g)\neq0$. This allows to define $\tilde{\Lambda}: A(\bb{D}) \to \bb{C}$ by 
$$
\tilde{\Lambda}(f)=\Lambda(gf)/\Lambda(g)
$$ for all $f\in A(\bb{D})$. It is easy to check that $\tilde{\Lambda}$ is a linear map. Further, 
$$
\tilde{\Lambda}(f_1f_2)=\frac{\Lambda(gf_1f_2)}{\Lambda(g)}=\frac{\Lambda(gf_1f_2)\Lambda(g)}{\Lambda(g)\Lambda(g)}=\frac{\Lambda(gf_1)\Lambda(gf_2)}{\Lambda(g)\Lambda(g)}=\tilde{\Lambda}(f)\tilde{\Lambda}(g), 
$$ 
which proves that $\tilde{\Lambda}$ is multiplicative. Thus, there exists a point $w\in\overline{\mathbb{D}}$ such that $\tilde{\Lambda}(f)=f(w)$; equivalently, 
$\Lambda(gf)=\Lambda(g)f(w)$ for all $f\in A(\mathbb{D})$. Taking $f=g$, we get $\Lambda(g^2)=\Lambda(g)g(w)=\Lambda(g)^2$, which yields that $\Lambda(g)=g(w)$. 
Hence, $\Lambda(gf)=g(w)f(w)=(gf)(w)$ for all $f\in A(\mathbb{D})$. This completes the proof. 
\end{proof} 

The preceding result can be extended to the closure of $g A(\mathbb{D})$. In a classical paper, Rudin proved that every closed ideal of the disc algebra 
$A(\mathbb{D})$ is principal; that is, for any closed ideal $I \subset A(\mathbb{D})$, there exists $g \in A(\mathbb{D})$ such that 
$I = \overline{g\mathcal{A}(\mathbb{D})}.$ Combining this fact with the above lemma, we obtain the following result, which characterizes the maximal ideal space of a closed 
subalgebra of \( \mathcal{A}(\mathbb{D}) \) that is also an ideal.  

 \begin{prop}
 Every non-zero multiplicative linear functional on a closed subalgebra of $A(\bb D)$ that is also an ideal is a point evaluation at a point of $\overline{\bb D}$.
 \end{prop}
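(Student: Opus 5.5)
Let $I$ be a closed subalgebra of $A(\bb D)$ that is also an ideal, and let $\Lambda$ be a non-zero multiplicative linear functional on $I$. By Rudin's theorem on closed ideals of the disc algebra, there is $g\in A(\bb D)$ with $I=\overline{gA(\bb D)}$. The plan is to apply Proposition \ref{ideal-disc} on the dense subalgebra $gA(\bb D)$ and then pass to the closure by continuity.

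First I would show that $\Lambda$ is non-zero on $gA(\bb D)$. Since $\Lambda\neq 0$, pick $f\in I$ with $\Lambda(f)\ne 0$. Then $\Lambda(f^2)=\Lambda(f)^2\neq 0$. Because $I$ is an ideal of $A(\bb D)$ and $f\in A(\bb D)$, we have $f\cdot f\in fI$. If $g h_n\to f$ with $h_n\in A(\bb D)$, then $f\, g h_n\in gA(\bb D)$ and $\Lambda(f g h_n)=\Lambda(f)\Lambda(gh_n)$, which only uses multiplicativity on $I$. If $\Lambda$ vanished on $gA(\bb D)$, this would give $\Lambda(f\,gh_n)=0$ for every $n$. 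To reach a contradiction I need $fgh_n\to f^2$ to imply $\Lambda(fgh_n)\to\Lambda(f^2)$, which requires continuity of $\Lambda$.

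So the key step, and the main obstacle, is automatic continuity. I would argue that $I$ is a commutative Banach algebra, possibly non-unital, under the sup norm. A standard fact is that multiplicative linear functionals on such algebras are bounded with norm at most $1$: if $|\Lambda(x)|>\|x\|$ for some $x$, put $y=x/\Lambda(x)$, so $\|y\|<1$. The series $z=\sum_{k\ge1}y^k$ converges in $I$ and satisfies $y+yz=z$. Applying $\Lambda$ gives $1+\Lambda(z)=\Lambda(z)$, a contradiction. This argument avoids needing a unit, which matters because $I$ typically has none. With continuity in hand, the previous paragraph shows $\Lambda|_{gA(\bb D)}\neq 0$.

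Proposition \ref{ideal-disc} then gives $w\in\overline{\bb D}$ with $\Lambda(h)=h(w)$ for all $h\in gA(\bb D)$. Both $\Lambda$ and evaluation at $w$ are continuous on $I$; the latter satisfies $|h(w)|\le\|h\|_\infty$ since $h$ is continuous on $\overline{\bb D}$. They agree on the dense set $gA(\bb D)$, hence on all of $I$. This completes the plan.
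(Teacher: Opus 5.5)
Your proof is correct and follows the same route as the paper, which simply combines Rudin's theorem $I=\overline{gA(\bb D)}$ with Proposition~\ref{ideal-disc}. You also supply two details the paper leaves implicit: the automatic continuity of multiplicative linear functionals on the non-unital Banach algebra $I$ (your Neumann-series argument is sound), and the density argument that passes from $gA(\bb D)$ to $I$. Note that once continuity is known, non-vanishing of $\Lambda$ on the dense set $gA(\bb D)$ follows at once, so the detour through $f^2$ is not needed.
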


We shall use the following consequence of Proposition \ref{ideal-disc} in the proof of our main result (Theorem \ref{H_1}) of this section.
	
\begin{cor}\label{subalg}
Let $n\ge1$ be a positive integer and $\Lambda:\mathcal{A}:=\bb{C}\oplus z^n A(\bb{D})\to \bb{C}$ be a multiplicative linear functional. Then there exists a point $w\in\overline{\mathbb{D}}$ such that $\Lambda(f)=f(w)$ for all $f$ in $\mathcal{A}$. 
\end{cor}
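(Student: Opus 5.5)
Restrict $\Lambda$ to the ideal $z^nA(\bb D)$ and apply Proposition \ref{ideal-disc} with $g=z^n$. Then use multiplicativity to pin down $\Lambda$ on the constants.

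First, the zero functional. The statement presumably intends $\Lambda$ to be non-zero, since a point evaluation is never zero on $1$. So I assume $\Lambda\neq 0$. Because $\Lambda(1)^2=\Lambda(1)$ and $\Lambda(f)=\Lambda(1\cdot f)=\Lambda(1)\Lambda(f)$, we get $\Lambda(1)=1$.

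Next, the ideal part. The restriction $\Lambda_0:=\Lambda|_{z^nA(\bb D)}$ is linear and multiplicative on the algebra $z^nA(\bb D)$, which is of the form $gA(\bb D)$ with $g=z^n\in A(\bb D)$. There are two cases.
\begin{enumerate}
\item If $\Lambda_0\neq0$, Proposition \ref{ideal-disc} gives $w\in\overline{\bb D}$ with $\Lambda(z^nh)=w^nh(w)$ for all $h\in A(\bb D)$. Every $f\in\mathcal A$ can be written $f=c+z^nh$ with $c\in\bb C$ and $h\in A(\bb D)$. Linearity and $\Lambda(1)=1$ then give $\Lambda(f)=c+w^nh(w)=f(w)$.
\item If $\Lambda_0\equiv0$, then $\Lambda(c+z^nh)=c$ for all such $f$. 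Take $w=0$: since $n\ge1$, we have $f(0)=c$, so $\Lambda(f)=f(0)$ for all $f\in\mathcal A$.
\end{enumerate}

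The main point needing care is that the decomposition $\mathcal A=\bb C\oplus z^nA(\bb D)$ is a genuine direct sum, so that $c$ is well defined. This holds because a nonzero constant cannot vanish at $0$, so $c=f(0)$ is uniquely determined. One also needs $z^nA(\bb D)$ to be closed under multiplication so that the restriction is a multiplicative functional on an algebra. Both checks are routine.
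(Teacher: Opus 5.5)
Your proof is correct and follows the same route as the paper's: restrict $\Lambda$ to the ideal $z^nA(\bb D)$ and apply Proposition \ref{ideal-disc} with $g=z^n$. Your extra care fills in details that the paper's one-line proof leaves implicit: you assume $\Lambda\neq 0$ so that $\Lambda(1)=1$ handles the constants, and you treat separately the case where the restriction vanishes, which Proposition \ref{ideal-disc} does not cover and which gives $w=0$.
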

\begin{proof} The restriction of $\Lambda$ to $z^n A(\bb D)$ is still a mlf. Then the result follows using Proposition \ref{ideal-disc}.
\end{proof}

We are now ready to prove the following characterization theorem for a class of mlfs on $H^\infty_1$.

\begin{thm}\label{H_1}
Let $\Lambda:H^\infty_1 \to \bb{C}$ be a multiplicative linear functional satisfying $\Lambda(z^2)\in\bb{D}$. Then there exists a $w\in\bb{D}$ such that 
$$
\Lambda(f)=f(w)
$$
for all $f\in H_1^\infty$.
\end{thm}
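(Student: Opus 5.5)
We will show that a non-zero $\Lambda$ (a multiplicative linear functional being understood as non-zero, so that $\Lambda(1)=1$) coincides with evaluation at a square root $w$ of $\lambda:=\Lambda(z^2)$. The strategy is to first pin down $w$ using Corollary \ref{subalg}. We then handle $H^\infty_1$ by splitting each function into its even and odd parts, exploiting the fact that these parts of an $H^\infty_1$ function have a very rigid form.

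\emph{Step 1 (locating $w$).} Since $A(\bb D)$-type elements $\bb{C}\oplus z^2A(\bb D)$ form a subalgebra of $H^\infty_1$, the restriction of $\Lambda$ to it is a multiplicative linear functional. By Corollary \ref{subalg} (with $n=2$) there is $w\in\overline{\bb D}$ with $\Lambda(f)=f(w)$ for all $f\in\bb C\oplus z^2A(\bb D)$. In particular $w^2=\Lambda(z^2)=\lambda\in\bb D$, so $|w|<1$. We also record $\Lambda(z^3)=w^3$ and $\Lambda(z^6)=w^6$, which are needed later.

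\emph{Step 2 (even parts).} Let $f\in H^\infty_1$ and write $f(z)=e(z)+o(z)$ with $e(z)=\tfrac12(f(z)+f(-z))$ and $o(z)=\tfrac12(f(z)-f(-z))$. Both parts lie in $H^\infty_1$. The even part has the form $e(z)=E(z^2)$ with $E\in H^\infty$. Since $\lambda\in\bb D$, we can factor
$$E(\zeta)-E(\lambda)=\frac{\zeta-\lambda}{1-\bar\lambda\zeta}\,K(\zeta)$$
with $K\in H^\infty$. Put $q(z)=\dfrac{z^2-\lambda}{1-\bar\lambda z^2}$, which lies in $\bb C\oplus z^2A(\bb D)$, so $\Lambda(q)=q(w)=0$. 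Since $K(z^2)\in H^\infty_1$ and $e=E(\lambda)+q\cdot K(z^2)$, multiplicativity gives
$$\Lambda(e)=E(\lambda)=E(w^2)=e(w).$$

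\emph{Step 3 (odd parts), the delicate step.} Because $f'(0)=0$, the odd part has the form $o(z)=z^3v(z^2)$ with $v\in H^\infty$. The difficulty is that $o$ need not vanish at $w$ in a way that can be factored inside $H^\infty_1$: a function of $z^2$ sees $w$ and $-w$ simultaneously. This sign ambiguity is what must be resolved. We handle it by multiplying by $z^3$. The product $z^3o(z)=z^6v(z^2)$ is even, so Step 2 gives $\Lambda(z^3o)=w^6v(w^2)$. By multiplicativity this also equals $\Lambda(z^3)\Lambda(o)=w^3\Lambda(o)$.
\begin{itemize}
\item If $w\neq0$, dividing by $w^3$ yields $\Lambda(o)=w^3v(w^2)=o(w)$.
\item If $w=0$, then $o^2$ is even with $o^2(0)=0$, so Step 2 gives $\Lambda(o)^2=\Lambda(o^2)=o(0)^2=0$. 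Hence $\Lambda(o)=0=o(w)$.
\end{itemize}

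\emph{Conclusion.} By linearity, $\Lambda(f)=\Lambda(e)+\Lambda(o)=e(w)+o(w)=f(w)$ for every $f\in H^\infty_1$, with $w\in\bb D$ as required.
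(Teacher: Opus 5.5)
Your proof is correct. It shares the paper's first step, using Corollary~\ref{subalg} to get $w\in\mathbb{D}$ with $\Lambda(z^2)=w^2$ and $\Lambda(z^3)=w^3$, and then takes a different route.

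The paper splits into the cases $\Lambda(z^2)=0$ and $\Lambda(z^2)\neq0$. In the second case it divides $(f-f(w))(f-f(-w))$ by $z^2-w^2$ inside $H^\infty_1$, which gives, for each $f$ separately, the dichotomy $\Lambda(f)\in\{f(w),f(-w)\}$. It then excludes the wrong sign by truncating the Taylor series of $f$ and comparing $p_{2n+1}(w)$ with $p_{2n+1}(-w)$ for all $n$.

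You instead use that every $f\in H^\infty_1$ splits as $E(z^2)+z^3v(z^2)$ with $E,v\in H^\infty$. On the even part the classical $H^\infty$ argument applies verbatim: divide by the Blaschke factor $b_\lambda(z^2)$, on which $\Lambda$ vanishes. The sign ambiguity is then settled once, by the single value $\Lambda(z^3)=w^3$, rather than function by function. This is more structural, handles $\Lambda(z^2)=0$ almost uniformly, and avoids the coefficient chasing. You are also right to make $\Lambda\neq0$ explicit, since the paper uses $\Lambda(1)=1$ without saying so.

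Step 3 can be shortened. Both $z^3$ and $v(z^2)$ lie in $H^\infty_1$, so $\Lambda(o)=\Lambda(z^3)\Lambda(v(z^2))=w^3v(w^2)=o(w)$ directly. This needs no division by $w^3$ and no separate case $w=0$.
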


\begin{proof} We divide the proof into two cases: namely, $\Lambda(z^2)=0$ and $\Lambda(z^2)\neq 0$. First, suppose that $\Lambda(z^2)=0$. Then, using multiplicativity of $\Lambda$, we obtain $\Lambda(z^4f)=0$ for all $f\in H^\infty$. In particular, $\Lambda(z^6)=0$, which implies that $(\Lambda(z^3))^2=\Lambda(z^6)=0,$ that is, $\Lambda(z^3)=0.$ Let $f\in H^\infty_1$. Since $f^{\prime}(0)=0$, we can decompose it as  
\begin{equation*}
f(z)=a_0+a_2z^2+a_3z^3+z^4f_0
\end{equation*} 
for some $f_0\in H^\infty$.Then, $\Lambda(f)=a_0= f(0)$. Thus, for the case $\Lambda(z^2)=0$, we have $\Lambda(f)=f(0)$ for all $f\in H^\infty_1.$  

We now turn to the case $\Lambda(z^2)\neq 0$. Using Corollary \ref{subalg} for the restriction of $\Lambda$ to $\bb C\oplus z^2A(\bb D)$, we obtain a point $w\in \overline{\bb D}$ such that $\Lambda(f)=f(w)$ for all 
$f\in \bb{C}\oplus z^2 A(\bb{D})$. Then, $w^2=\Lambda(z^2)\in\bb{D}$, which in fact implies that $w\in\bb{D}$ and $w\ne 0$. We shall establish that $\Lambda(f)=f(w)$ for all 
$f\in H^\infty_1$. To this end, let $f\in H^\infty_1$. Define a function $k$ on $\bb D$ as follows 
$$ 
k(z)=
\begin{cases}
\frac{(f(z)-f(w))(f(z)-f(-w))}{z^2-w^2} &\quad \text{if}  \quad z\neq w,-w\\
\frac{f^\prime(w)(f(w)-f(-w))}{2w} &\quad \text{if}  \quad z= w\\
\frac{f^\prime(-w)(f(-w)-f(w))}{-2w} &\quad \text{if}  \quad z= -w\\
\end{cases}.
$$
Then, the function $k$ is bounded analytic on $\bb D$. Moreover, one can readily verify that $k\in H^\infty_1$. Additionally, 
$$
(z^2-w^2)k=(f-f(w))(f-f(-w)),
$$ 
which implies that 
\begin{equation*}
\Lambda(f-f(w))\Lambda(f-f(-w))=\Lambda(k).0=0.
\end{equation*}
Therefore, for each $f\in H^\infty_1$, either $\Lambda(f)=f(w)$ or $\Lambda(f)=f(-w)$. We claim that when $\Lambda(f)=f(-w)$, one in fact has $f(w)=f(-w)$, which in turn would imply $\Lambda(f)=f(w).$ To settle this claim, suppose $f\in H^\infty_1$ satisfies $\Lambda(f)=f(-w)$. Let $f(z)=\sum_{n=0}^{\infty}a_nz^n$. For each $n\ge 1$, we decompose 
$f$ as 
\begin{eqnarray}
f(z) &=&\underbrace{a_0+a_2z^2+a_3z^3+\cdots+a_{2n+1}z^{2n+1}}_{p_{2n+1}(z)}+\underbrace{a_{2n+2}z^{2n+2}+ a_{2n+3}z^{2n+3}\dots}_{f_{2n+1}(z)}\label{S}\\
&=&p_{2n+1}(z)+f_{2n+1}(z)\label{p3}
\end{eqnarray} 
Then, applying $\Lambda$ to Equation (\ref{p3}), we have $f(-w)= \Lambda(p_{2n+1})+\Lambda(f_{2n+1}) = p_{2n+1}(w)+\Lambda(f_{2n+1})$. Since each 
$f_{2n+1}\in H^\infty_1,$ therefore $\Lambda(f_{2n+1})$ is either $f_{2n+1}(w)$ or $f_{2n+1}(-w)$. If there exists some $n\ge 1$ such that 
$\Lambda(f_{2n+1})=f_{2n+1}(w)$, then $f(-w)=f(w).$ Suppose now that $\Lambda (f_{2n+1})=f_{2n+1}(-w)$ for all $n\ge 1$. Then 
$f(-w)=p_{2n+1}(w)+f_{2n+1}(-w)$, which implies that 
\begin{equation}\label{p4}
p_{2n+1}(-w)=p_{2n+1}(w)
\end{equation} 
for every $n\ge 1.$ Then, taking $n=1$ in Equation (\ref{p4}), we obtain  
$$
a_0+a_2w^2-a_3w^3 =a_0+a_2w^2+a_3w^3,
$$
which yields $a_3=0$. Next, assuming $n=5$ in Equation (\ref{p4}), we have    
$$
a_0+a_2w^2+a_4w^4-a_5w^5 =a_0+a_2w^2+a_4w^4+a_5w^5,
$$
which implies $a_5=0$. Continuing in this manner, we conclude that $a_{2n+1}=0$ for all $n\ge 1$, and hence $f$ is an even function. Therefore, $f(w)=f(-w)=\Lambda(f)$. This settles the case $\Lambda(z^2)\ne 0$ and completes the proof. 
\end{proof}

\section {A GKZ-type result and weighted composition operators}\label{gkz}
In this section, we prove our main results, namely Theorems \ref{M}, \ref{weighted1}, and \ref{weighted2}. Theorem \ref{M} establishes a GKZ-type theorem for $H^p_{\alpha,\beta}$ spaces, while Theorems \ref{weighted1} and \ref{weighted2} characterize weighted composition operators on these spaces for different codomains.

Although the proof of Theorem \ref{M} is motivated by the arguments used to obtain the GKZ-type theorem for Hardy spaces in \cite{MR2015}, it requires substantially different techniques, since several structural features of the classical Hardy spaces are unavailable in the setting of $H^p_{\alpha,\beta}$.  

Theorems \ref{weighted1} and \ref{weighted2} are obtained as consequences of Theorem \ref{M}. In Hardy spaces, as well as in the other settings discussed previously, analogous characterizations follow from corresponding GKZ-type theorems through arguments that rely crucially on the simultaneous availability of the constant function $1$ and the identity function $z$. However, our setting lacks this feature; consequently, the existing methods do not apply directly. As a result, the proofs of Theorems \ref{weighted1} and \ref{weighted2} require significant modifications of the approach used in the existing literature. Nevertheless, the resulting characterizations remain similar in form.
	
\begin{thm}\label{M} Let $\Lambda:H^p_{\alpha,\beta} \to \bb{C}$ be a continuous linear functional such that $\Lambda(f)\neq 0$ for all cyclic $f\in H^p_{\alpha,\beta}$. Then there is a non-zero scalar $c$ and a point $w\in\bb{D}$ such that $\Lambda(f)=cf(w)$ for all $f\in H^p_{\alpha,\beta}$. 
\end{thm}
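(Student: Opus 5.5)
Fix a cyclic function $s\in H^p_{\alpha,\beta}$, for instance the one from the Example. By hypothesis $\Lambda(s)\neq 0$, and we normalise so that $\Lambda(s)=1$. We then define a linear functional $L$ on the multiplier algebra $H^\infty_1$ by
$$
L(h)=\Lambda(sh), \qquad h\in H^\infty_1 .
$$
This is legitimate because $sh\in H^p_{\alpha,\beta}$ by Theorem \ref{multiplier}. Moreover $L$ is continuous, since $\|sh\|_p\le \|h\|_\infty\|s\|_p$. The plan has four steps: show that $L$ is multiplicative on $H^\infty_1$, show that $L(z^2)\in\bb D$, apply Theorem \ref{H_1} to get a point $w$, and transfer the result back to $\Lambda$ by density.

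\textbf{Step 1: $L$ is multiplicative.} We want a Banach-algebra GKZ argument on $H^\infty_1$. The hypothesis to check is that $L(h)\neq 0$ whenever $h$ is invertible in $H^\infty_1$. If $h$ and $1/h$ both lie in $H^\infty_1$, then $h$ is outer in $H^p$, and so $sh$ is outer. Since also $sh\in H^p_{\alpha,\beta}$, Theorem \ref{cyclic} shows $sh$ is cyclic, and hence $L(h)=\Lambda(sh)\neq 0$. Since $L(1)=\Lambda(s)=1$, the classical GKZ theorem applied to the unital Banach algebra $H^\infty_1$ shows that $L$ is multiplicative.

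\textbf{Step 2: $L(z^2)\in\bb D$.} This is the step I expect to be the main obstacle, because the classical GKZ theorem only places $L(z^2)$ in the spectrum of $z^2$, i.e.\ in $\overline{\bb D}$. To rule out $|L(z^2)|=1$ we use continuity with respect to the $H^p$ norm, which is stronger than the sup norm. For every $n$,
$$
L(z^2)^n=L(z^{2n})=\Lambda(sz^{2n}).
$$
The functions $sz^{2n}$ tend to $0$ weakly in $H^p$ for $1<p<\infty$, because their Taylor coefficients shift off to infinity while their norms stay bounded. Since a continuous functional is weakly continuous, $\Lambda(sz^{2n})\to 0$, hence $L(z^2)^n\to 0$ and $|L(z^2)|<1$. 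For $p=1$ the same conclusion should hold by writing $s=s_1s_2$ with $s_1,s_2\in H^2$ and using that $\Lambda$ is represented by a BMOA-type symbol; then $\langle z^{2n}s,g\rangle\to 0$ because the Fourier coefficients of $\bar g\,s$ tend to zero. If that route fails, a fallback is to take $\lambda=L(z^2)$ with $|\lambda|=1$ and use the functions $s\cdot(\lambda+z^2)/2$. These are outer, since $\lambda+z^2$ vanishes only on the circle, and they lie in $H^p_{\alpha,\beta}$; the plan would then be to approximate them by cyclic functions on which $\Lambda$ would have to vanish, although I would try the weak-convergence argument first.

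\textbf{Step 3: apply Theorem \ref{H_1}.} By Steps 1 and 2, Theorem \ref{H_1} gives a point $w\in\bb D$ with $L(h)=h(w)$ for all $h\in H^\infty_1$. That is,
$$
\Lambda(sh)=h(w)=\frac{(sh)(w)}{s(w)},
$$
where we use that $s(w)\neq 0$ because $s$ is cyclic and so vanishes nowhere.

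\textbf{Step 4: transfer to $\Lambda$.} Since $s$ is cyclic, $sH^\infty_1$ is dense in $H^p_{\alpha,\beta}$. Both $\Lambda$ and point evaluation at $w$ are continuous on $H^p_{\alpha,\beta}$, so the identity from Step 3 extends to all of $H^p_{\alpha,\beta}$. This gives $\Lambda(f)=cf(w)$ with $c=1/s(w)\neq 0$, after undoing the normalisation.
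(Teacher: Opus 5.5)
Your overall architecture is the same as the paper's: GKZ on $H^\infty_1$ for $L(h)=\Lambda(sh)$, then Theorem \ref{H_1}, then density. Steps 1, 3 and 4 are correct. Your Step 4 is slightly cleaner than the paper's, which adds an unnecessary check that the constant does not depend on $s$.

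The genuine difference is Step 2. The paper rules out $|L(z^2)|=1$ with the non-vanishing hypothesis, not with continuity. For every $|\lambda|\ge 1$ the polynomial $z^2-\lambda$ has no zeros in $\mathbb{D}$, so it is outer; zeros on $\mathbb{T}$ are allowed. Hence $s(z^2-\lambda)$ is outer and lies in $H^p_{\alpha,\beta}$, so it is cyclic by Theorem \ref{cyclic}. It follows that $L(z^2)-\lambda=\Lambda(s(z^2-\lambda))\neq 0$.

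This is what your fallback was reaching for, but as written it fails. First, the sign is wrong: $L\bigl((\lambda+z^2)/2\bigr)=\lambda\neq 0$, so no contradiction arises. Second, no approximation is needed, since those functions are already cyclic by the same theorem you used in Step 1.

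Your weak-convergence route is a valid alternative, and it shows something slightly different: $H^p$-continuity alone forces $|L(z^2)|<1$. It costs some duality theory. The paper's route needs none, and it uses continuity only in the final density step.

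Your treatment of $p=1$ is left incomplete, and the BMOA sketch is not right as stated. For $g\in\mathrm{BMOA}$ and $s\in H^1$, the product $\bar g s$ need not be integrable, and the $H^1$–BMOA pairing is not a literal integral. There is a clean fix that handles all $1\le p<\infty$ at once:
\begin{itemize}
\item Extend $\Lambda$ by Hahn--Banach from $H^p$, viewed as a closed subspace of $L^p(\mathbb{T})$ via boundary functions, to all of $L^p(\mathbb{T})$.
\item Represent the extension by some $g\in L^q(\mathbb{T})$, with $q=\infty$ when $p=1$.
\item Then $\Lambda(sz^{2n})$ is the $(-2n)$-th Fourier coefficient of the integrable function $\tilde s\, g$, where $\tilde s$ is the boundary function of $s$. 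This tends to $0$ by the Riemann--Lebesgue lemma.
\end{itemize}
This gives $L(z^2)^n\to 0$, hence $|L(z^2)|<1$, for every $p$ in the stated range.
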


\begin{proof}
Fix a cyclic function $s\in H^p_{\alpha,\beta}$, and define $\Lambda_s:H^\infty_{1} \to \bb{C}$ by 
$$
\Lambda_s(f)=\Lambda(sf)/\Lambda(s).
$$ 
It is easy to see that $\Lambda_s$ is linear and $\Lambda_s(1)=1$. Further, for an invertible $g$ in $H^\infty_{1}$, the function $sg$ is cyclic; hence $\Lambda$ is non-zero on $sg$. Thus, $\Lambda_s$ is non-zero on invertible elements in $H^\infty_{1}$. Consequently, by the $GKZ$ theorem, $\Lambda_s$ is multiplicative on $H^\infty_{1}$. 
	 	
Moreover, for each scalar $\lambda$ with $|\lambda|\ge 1$, the polynomial $z^2-\lambda$ is outer and belongs to $H^\infty_1$. Therefore, $s(z^2-\lambda)$ is outer and 
belongs to $H^p_{\alpha, \beta}$, and hence cyclic whenever $|\lambda|\ge 1$. This yields that $\Lambda_s(z^2-\lambda)=\Lambda(s(z^2-\lambda))\ne 0$ for every 
$\lambda$ outside $\bb D$; thus, $\Lambda_s(z^2)\in \bb{D}$. Therefore, using Theorem \ref{H_1}, we obtain a $w\in\bb{D}$ such that 
\begin{equation*}
\Lambda_s(f)=f(w) 
\end{equation*}
for all $f\in H^\infty_{1}$. It follows that $\Lambda(sf) = \Lambda_s(f)\Lambda(s) = f(w)\Lambda(s)$ for all $f\in H^\infty_{1}$.  

Let $h$ be any function in $H^p_{\alpha, \beta}$. Then there exists a sequence $\{g_n\}$ in $H^\infty_{1}$ such that $sg_n\longrightarrow h$ in $H^p_{\alpha,\beta}$. Continuity of $\Lambda$ implies that $\Lambda(sg_n)\longrightarrow \Lambda(h)$. We also have $\Lambda(sg_n)=\Lambda(s)g_n(w)$ and $\Lambda(s)\neq0$; therefore,  it follows that   
\begin{equation}\label{lambda}
g_n(w)\longrightarrow \Lambda(h)/\Lambda(s).
\end{equation}

On the other hand convergence in $p$ norm implies pointwise convergence, which means, $s(w)g_n(w)\longrightarrow h(w)$. Thus, we conclude that 
 \begin{equation}\label{c}
\Lambda(h)=\frac{\Lambda(s)}{s(w)}h(w)
\end{equation}
for all $h\in H^p_{\alpha,\beta}$. 

We claim that the constant $\Lambda(s)/s(w)$ is independent of the choice of the cyclic function $s$. Once this claim is established, (\ref{c}) yields the desired form for 
$\Lambda.$ For this purpose, we take another cyclic function $s_0$ and define 
$\Lambda_{s_{0}}:H^\infty_{1} \to \bb{C}$ by $\Lambda_{s_{0}}(f)=\Lambda(s_{0}f)/\Lambda(s_{s_{0}})$. Then, using the similar arguments as above, we obtain a 
$\zeta\in\bb{D}$ satisfying 
\begin{equation}\label{r}
\Lambda(h)=\frac{\Lambda(s_{0})}{s_0(\zeta)}h(\zeta)
\end{equation} 
for all $h\in H^p_{\alpha,\beta}$. From (\ref{c}) and (\ref{r}), we deduce that $\frac{\Lambda(s)}{s(w)}h(w)=\frac{\Lambda(s_{0})}{s_0(\zeta)}h(\zeta)$ for all $h\in H^p_{\alpha,\beta}$. In particular, for $h(z)=z^n$ with $n\ge2$, we have 
$$
\frac{\Lambda(s)}{s(w)}w^n=\frac{\Lambda(s_{0})}{s_0(\zeta)}{\zeta}^n
$$
If either $w$ or $\zeta$ is zero, then the other must also be zero. Suppose $w$ and $\zeta$ are both non-zero. Then, we have 
$$
(w/\zeta)^2=(w/\zeta)^3,
$$
which readily implies that $w=\zeta$. Thus, $\frac{\Lambda(s)}{s(w)}=\frac{\Lambda(s_{0})}{s_0(\zeta)}$; hence, (\ref{c}) gives the desired form of $\Lambda$. This 
completes the proof.
\end{proof}
	 	 
We now use Theorem \ref{M} to derive the following charaterization of weighted composition operators on $H^p_{\alpha, \beta}$ spaces into the space of analytic functions on $\bb D$, equipped with the Fr\'echet-space topology. 

\begin{thm}\label{weighted1}
Let $T:H^p_{\alpha,\beta} \to {\rm Hol}(\bb{D})$ be a continuous linear functional such that $(Tg)(z)\neq0$ for every cyclic function $g\in H^p_{\alpha,\beta}$ and $z\in\bb{D}$, where ${\rm Hol}(\bb{D})$ is considered with the Fr\'echet-space topology. Then there exist holomorphic functions $\phi:\bb{D}\to\bb{D}$ and $\psi:\bb{D}\to \bb{C}\setminus \{0\}$ such that 
\begin{equation*}
Tf=\psi.(f\circ{\phi})
\end{equation*} 
for every $f\in H^p_{\alpha,\beta}$. 
\end{thm}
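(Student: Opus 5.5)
Fix $z\in\bb D$ and consider the composition of $T$ with point evaluation at $z$, $\Lambda_z(f):=(Tf)(z)$. Point evaluation is continuous on ${\rm Hol}(\bb D)$ with the Fr\'echet topology, so $\Lambda_z$ is a continuous linear functional on $H^p_{\alpha,\beta}$. By hypothesis it does not vanish at any cyclic function. Theorem \ref{M} therefore gives a nonzero scalar $\psi(z)$ and a point $\phi(z)\in\bb D$ with
$$(Tf)(z)=\psi(z)\,f(\phi(z))\qquad\text{for all } f\in H^p_{\alpha,\beta}.$$
This defines functions $\phi:\bb D\to\bb D$ and $\psi:\bb D\to\bb C\setminus\{0\}$ with $Tf=\psi\cdot(f\circ\phi)$. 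It remains to show that $\psi$ and $\phi$ are holomorphic. (One also needs $\phi(z)$ and $\psi(z)$ to be uniquely determined. This follows from the uniqueness argument at the end of the proof of Theorem \ref{M}, which uses $h=z^2,z^3$.)

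The difficulty is that neither $1$ nor $z$ lies in $H^p_{\alpha,\beta}$, so we cannot read off $\psi=T1$ and $\phi=Tz/T1$ as in the classical case. Instead I would use the cyclic function $s(z)=(1+z/2)^{2\beta/\alpha}$ from the Example, together with $z^2,z^3\in H^p_{\alpha,\beta}$. Set $u_2:=T(z^2)=\psi\phi^2$ and $u_3:=T(z^3)=\psi\phi^3$; both are holomorphic. Since $s$ is cyclic, $Ts=\psi\,(s\circ\phi)$ is holomorphic and nowhere zero. Also $s^{-1}(z^2s)$ and $s^{-1}(z^3s)$ give $T(z^2s)=\psi\phi^2(s\circ\phi)$, so
$$\phi^2=\frac{T(z^2s)}{Ts},\qquad \phi^3=\frac{T(z^3s)}{Ts}$$
are holomorphic on $\bb D$.

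Let $F:=\phi^2$ and $G:=\phi^3$, both holomorphic. Where $F\neq0$ we have $\phi=G/F$, a meromorphic function that is locally bounded (since $|\phi|<1$), hence holomorphic on $\bb D$ minus the zeros of $F$ by removable singularities. If $F\equiv0$ then $\phi\equiv0$. Otherwise the zeros of $F$ are isolated, and $\phi$ is bounded near them, so $\phi$ extends holomorphically. The extension agrees with the pointwise $\phi$ because at a zero of $F$ we have $\phi=0$, and the extension there equals a limit of values whose squares tend to $0$. Thus $\phi$ is holomorphic.

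For $\psi$, write $\psi=Ts/(s\circ\phi)$. Here $s\circ\phi$ is holomorphic and nonvanishing, since $s$ is analytic and nonvanishing on the disc of radius $2$. So $\psi$ is holomorphic and nowhere zero.

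The main obstacle is the removable-singularity step recovering $\phi$ from $\phi^2$ and $\phi^3$ at the zeros of $\phi$. Boundedness of $\phi$ makes this work: $\phi=G/F$ off a discrete set and is bounded, so Riemann's theorem applies. The case $\phi\equiv0$ is trivial.
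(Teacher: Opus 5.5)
Your proof is correct, and its skeleton matches the paper's. Both apply Theorem \ref{M} pointwise to $f\mapsto (Tf)(\zeta)$, use $z^2,z^3$ in place of the unavailable $1,z$ to recover $\phi$ as a ratio, and then set $\psi=Ts/(s\circ\phi)$.

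The real difference is how you prove $\phi$ is holomorphic at its zeros. The paper works with $f_2=T(z^2)=\psi\phi^2$ and $f_3=T(z^3)=\psi\phi^3$. It gets $\phi=f_3/f_2$ off $Z_{f_2}$ and then argues continuity at each zero $\zeta_0$ of $f_2$ by a sequential argument. From $f_2(\zeta_n)\to 0$ it deduces $\phi(\zeta_n)^2/s(\phi(\zeta_n))\to 0$. This yields $\phi(\zeta_n)\to 0$ only if $|s|$ stays bounded along the points $\phi(\zeta_n)$. The bound on $\overline{D(\zeta_0,r)}$ that the paper invokes is not the relevant one. The step is fine for a bounded cyclic $s$ such as the one in the Example, or it can be repaired with a maximum modulus argument.

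Your device of dividing $T(z^k s)$ by $Ts$ cancels the unknown weight $\psi$. So $\phi^2$ and $\phi^3$ are holomorphic outright, and the zeros are handled by Riemann's removable singularity theorem using only $|\phi|<1$. This works for every cyclic $s$, so your specific choice of $s$ from the Example is not actually needed.

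Similarly, your remark on uniqueness of $(\phi(z),\psi(z))$ is harmless but unnecessary. The identity $Tf=\psi\cdot(f\circ\phi)$ holds for any choice of the pair at each point, and nothing afterwards uses uniqueness.
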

	 	 
\begin{proof} Fix a cyclic function $s\in H^p_{\alpha,\beta}$. Then, for a given point $\zeta\in \bb{D}$, the map $T_\zeta: H^p_{\alpha, \beta} \to \bb{C}$ defined by 
$$
T_\zeta(f)=(Tf)(\zeta)/(Ts)(\zeta)
$$ is linear, continuous, and non-zero on every cyclic function in $H^p_{\alpha,\beta}$. Thus,  using Theorem \ref{M}, we obtain a point $w_\zeta\in\bb{D}$ such that $(Tf)(\zeta)/(Ts)(\zeta)= f(w_\zeta)/s(w_\zeta)$ for every $f\in H^p_{\alpha, \beta}$. Then, 
\begin{equation*}
\frac{(Tz^3)(\zeta)}{(Ts)(\zeta)}= \frac{w^3_\zeta}{s(w_\zeta)} \quad \text{and} \quad
\frac{(Tz^2)(\zeta)}{(Ts)(\zeta)}= \frac{w^2_\zeta}{s(w_\zeta)}.
\end{equation*}

Next, letting $f_2:=T(z^2)$ and $f_3:=T(z^3)$, we obtain   
\begin{equation}\label{f2/f3}
f_3(\zeta)= \frac{(Ts)(\zeta)}{s(w_\zeta)}w_\zeta^3 \quad \text{and} \quad
f_2(\zeta)=\frac{(Ts)(\zeta)}{s(w_\zeta)}w_\zeta^2.
\end{equation}
We now define a function $\phi: \bb{D}\to \bb{D}$ by 
$$
\phi(\zeta)=w_\zeta.
$$ We claim that $\phi$ is analytic on $\bb D$. Clearly, $\phi=\frac{f_3}{f_2}$ on 
$\bb D\setminus Z_{f_2}$, where $Z_{f_2}:=\{\zeta\in\bb{D} : f_2(\zeta)=(Tz^2)(\zeta)=0\}$. Therefore, $\phi$ is analytic on $\bb D\setminus Z_{f_2}$. Note that if $Z_{f_2}=\bb D$, then $f_2(\zeta)=0$ for every $\zeta\in \bb D$, which, using (\ref{f2/f3}), yields that 
$w_\zeta=0$ for every $\zeta\in \bb D$, which in turn implies that $\phi\equiv 0$; therefore, trivially analytic on $\bb D$. We now turn to case $\bb D\ne Z_{f_2}$. As noted above, 
we already have that $\phi$ is analytic on $\bb D\setminus Z_{f_2}$. Thus, to establish the analyticity of $\phi$ on $\bb D$, it is enough to show that it is continuous on $Z_{f_2}$. 
To this end, let $\zeta_0 \in Z_{f_2}$ and $\{\zeta_n\}$ be a sequence in $\bb D$ such that $\zeta_n \to \zeta_0$. Since we are assuming that $\bb D\ne Z_{f_2}$, we have  $f_2\not\equiv 0$, and therefore there exists some $0<r<1$ such that $f_2$ never vanishes on $D(\zeta_0,r)$, the open disc centred at $\zeta_0$ and radius $r$. This allows us to assume that $f_2(\zeta_n)\ne 0$ for every $n$. Firstly, $f_2(\zeta_0) =0$ implies $w_{\zeta_0}=0$; thus, $\phi(\zeta_0)=0$. Further, using $f_2(\zeta_n)\to 0$ and $(Ts)(\zeta_n)\to (Ts)(\zeta_0)$ in (\ref{f2/f3}), we conclude that $\frac{w_{\zeta_n}^2}{s(w_{\zeta_n})}\to 0$. However, $s$ being analytic on $\bb D$ is bounded on $\overline{D(\zeta_0,r)}$. It follows that $w_{\zeta_n}\to 0$; consequently, $\phi(\zeta_n)=w_{\zeta_n}\to 0=\phi(\zeta_0)$. Thus, $\phi$ is continuous on $Z_{f_2}.$ Hence, $\phi$ is analytic on $\bb D.$ 
 
 Lastly, define 
 $$
 \psi(\zeta)=(Ts)(\zeta)/s(\phi(\zeta)) \ \ {\rm for} \  \zeta\in \bb D.
 $$ 
 Then, $\psi$ maps $\bb D$ into $\bb C\setminus \{0\}$, it is analytic function, and 
$$
Tf=\psi .(f\circ\phi)
$$
for all $f\in H^2_{\alpha,\beta}$.
\end{proof}

The following result further sharpens the characterization in Theorem \ref{weighted1} under the stronger assumption that the operator maps the underlying space $H^p_{\alpha,\beta}$ onto itself. 

Before proving this result, we note that our argument showing that the associated symbol is necessarily an automorphism is partially motivated by \cite[Theorem 2.2]{Bou}. However, similar to the other arguments discussed at the beginning of this section, the proof in \cite{Bou} relies crucially on the simultaneous availability of a non-zero constant function and a translate of the function $z$, a structural property that fails in the present setting. Consequently, additional arguments are developed to overcome these structural obstacles and establish the desired conclusion.

\begin{thm}\label{weighted2}
Let $T:H^p_{\alpha,\beta} \to H^p_{\alpha,\beta} $ be a continuous surjective linear map such that $(Tg)(z)\neq0$ for every cyclic function $g\in H^p_{\alpha,\beta}$ 
and $z\in\bb{D}$. Then $T$ is invertible, and there exist a unimodular scalar $\lambda$ and an invertible function $\psi$ in $H^{\infty}$ such that 
$$
(Tf)(z)=\psi(z) f(\lambda z)
$$
for every $f\in H^p_{\alpha,\beta}$ and $z\in \mathbb{D}$.
\end{thm}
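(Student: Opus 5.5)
Theorem \ref{weighted1} applies to $T$, since $H^p_{\alpha,\beta}$ embeds continuously into ${\rm Hol}(\bb D)$. It yields holomorphic $\phi:\bb D\to\bb D$ and nonvanishing $\psi$ with $Tf=\psi\cdot(f\circ\phi)$. The plan is to show three things in turn: $\phi$ is injective and surjective (hence an automorphism); it fixes $0$ (hence a rotation); and $\psi$ is invertible in $H^\infty$.

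\emph{Injectivity of $\phi$.} If $\phi(a)=\phi(b)$ with $a\ne b$, then $(Tf)(a)/\psi(a)=(Tf)(b)/\psi(b)$ for all $f$. By surjectivity this relation holds for every $g\in H^p_{\alpha,\beta}$. Test it with $g=z^2 h$ for $h\in H^p$ arbitrary, say $h=(z-a)$ or $h=(z-b)$ times a suitable factor. This gives a function in $H^p_{\alpha,\beta}$ vanishing at $a$ but not at $b$, which is a contradiction. The degenerate cases $a=0$ or $b=0$ need care, but functions $z^2(z-a)$, $(\alpha+\beta z)$ and similar ones separate points of $\bb D$.

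\emph{$\phi$ is onto, and $\phi(0)=0$.} Suppose $\phi$ omits some $\mu\in\bb D$. I would follow \cite[Theorem 2.2]{Bou}, with the functions $1$ and $z-\mu$ replaced by elements of $H^p_{\alpha,\beta}$. Pick a cyclic $s$. Then $T^{-1}$ (by surjectivity and injectivity, plus the open mapping theorem, $T$ is invertible; injectivity of $T$ follows from $\phi$ nonconstant and $\psi\ne 0$) and the structure of $T$ let one write elements as $\psi\cdot(f\circ\phi)$. Use $g=z^2(z-\mu)^N$ or similar. Write $g=\psi\,(f\circ\phi)$; the point is that a zero of $g$ at a point of $\bb D$ must be a zero of $f\circ\phi$, and one seeks to force $f$ to vanish on a set incompatible with membership in $H^p_{\alpha,\beta}$.

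An alternative I would try first is to use a Schwarz–Pick/interpolation argument with point evaluations: $T^*$ maps $k_\zeta$-type functionals (evaluations $e_\zeta$) to $\overline{\psi(\zeta)}e_{\phi(\zeta)}$. Since $T$ is invertible, $T^*$ is invertible. The set of evaluations $\{e_w:w\in\bb D\}$ spans a weak$^*$-dense set, and evaluations are exactly the functionals of the form appearing in Theorem \ref{M}. Now apply Theorem \ref{M} to $\Lambda=e_\mu\circ T^{-1}$. This functional is nonzero on cyclic functions provided $T^{-1}$ maps cyclic functions to functions not vanishing at $\mu$. That last point is the main obstacle. I expect to handle it by proving $T$ maps cyclic functions onto cyclic functions: $T(sH^\infty_1)=(\psi\cdot s\circ\phi)(H^\infty_1\circ\phi)$, with density. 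Granting it, $e_\mu\circ T^{-1}=c\,e_{\nu}$, so $e_\mu=c\,e_\nu\circ T=c\,\psi(\nu)e_{\phi(\nu)}$, giving $\mu=\phi(\nu)$. So $\phi$ is an automorphism.

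With $\phi$ an automorphism, $f\circ\phi$ must satisfy the constraint of Lemma \ref{Cha} up to the weight $\psi$. Apply $T$ to $z^2$ and $z^3$: both images lie in $H^p_{\alpha,\beta}$, so $\psi\phi^2$ and $\psi\phi^3$ satisfy $F(0)\beta=F'(0)\alpha$. If $\phi(0)=a\ne0$, comparing these two conditions (and also those for $z^4$) yields $\phi'(0)=0$, contradicting that $\phi$ is an automorphism. Hence $\phi(0)=0$ and $\phi(z)=\lambda z$ with $|\lambda|=1$. Finally, $\psi$ is a multiplier of $H^p$, because $\psi\, z^2h(\lambda z)$ ranges over $z^2H^p$. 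The argument of Theorem \ref{multiplier} then gives $\psi\in H^\infty$, and $T^{-1}$, of the same form with $1/\psi(\bar\lambda z)$, gives $1/\psi\in H^\infty$.
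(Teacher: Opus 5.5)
\textbf{Gap: surjectivity of $\phi$.} Your arguments for injectivity of $\phi$, for $\phi(0)=0$ (via Lemma \ref{Cha} applied to $\psi\phi^2,\psi\phi^3$), and for $\psi^{\pm1}\in H^\infty$ are sound and close to the paper's. The genuine gap is the step you flag yourself: that $\phi$ is onto.

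The Bourdon-style paragraph is not yet an argument. In the Theorem \ref{M} route, the decisive claim is that $(T^{-1}g)(\mu)\neq 0$ for every cyclic $g$. The identity $T(sH^\infty_1)=(Ts)\,(H^\infty_1\circ\phi)$ does not deliver this. Since $T$ is a homeomorphism, the identity says that $s$ is cyclic iff $\overline{(Ts)(H^\infty_1\circ\phi)}=H^p_{\alpha,\beta}$. Together with $H^\infty_1\circ\phi\subseteq H^\infty$, the hypothesis that $Ts$ has no zeros, and Theorem \ref{cyclic}, this shows only that $T$ maps cyclic functions \emph{into} cyclic functions.

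You need the converse. For cyclic $g$ you would have to show $\overline{g\,(H^\infty_1\circ\phi)}=H^p_{\alpha,\beta}$, i.e.\ that the algebra $H^\infty_1\circ\phi$ is large. That is a statement about $\phi$ of exactly the kind being proved. Concretely, if $\phi$ omits $\mu$, the relation $g=\psi\cdot(f\circ\phi)$ places no restriction at all on $f(\mu)$. So nothing in your sketch excludes $(T^{-1}g)(\mu)=0$, and the argument is circular.

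\textbf{What the paper uses instead.} Genuinely analytic input about boundary behaviour is needed; the paper proves that $\phi$ is inner. Suppose $|\tilde\phi|<\delta<1$ on a set $S\subseteq\bb T$ of positive measure. Then:
\begin{enumerate}
\item $\psi=Ts/(s\circ\phi)$ has finite radial limits a.e.\ on $S$;
\item $\tilde\phi(S)$ avoids the zeros of $f_2=T^{-1}(z^2)$;
\item $q=f_3/f_2$ satisfies $q\circ\phi=z$, and a local inverse of $q$ near a point $\tilde\phi(e^{i\theta_0})$ with $q'\neq0$ continues $\phi$ and $\psi$ analytically across $e^{i\theta_0}$.
\end{enumerate}
Surjectivity of $T$ then forces every function in $H^p_{\alpha,\beta}$ to continue across $e^{i\theta_0}$, which is false. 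A univalent inner function is an automorphism, and your $\phi(0)=0$ argument finishes.

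\textbf{An alternative completion.} Your adjoint idea could also be completed, though not in the way you used it. From $T^*e_\zeta=\psi(\zeta)e_{\phi(\zeta)}$ and $\|T^*e_\zeta\|\ge\|T^{-1}\|^{-1}\|e_\zeta\|$, together with $\|e_w\|\asymp(1-|w|^2)^{-1/p}$ for $|w|\ge1/2$ and $\|e_w\|\lesssim(1-|w|^2)^{-1/p}$ for all $w$, one gets $|\psi(\zeta)|^p\gtrsim(1-|\phi(\zeta)|^2)/(1-|\zeta|^2)$. This tends to $\infty$ along radii ending in $S$, contradicting the finite radial limits of $\psi$ there. Hence $\phi$ is inner.
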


\begin{proof} Using Theorem \ref{weighted1}, there exist holomorphic functions $\phi: \bb{D} \to\bb{D}$ and $\psi: \bb{D} \to \bb{C}\setminus \{0\}$ such that 
$$
Tf=\psi(f\circ\phi).
$$

If $Tf=0$, then $f\circ\phi=0$. Note that surjectivity of $T$ ensures that $\phi$ is not a constant function. Therefore, $f\circ \phi=0$ implies that $f=0$, using the Identity Theorem. So, $T$ is injective; hence, it is invertible.  
	 	 	
We now show that $\phi$ is an automorphism. To this end, it suffices to prove that $\phi$ is a univalent inner function, since every univalent inner function is an automorphism 
of $\bb D$.  
            
 As $T$ is surjective, we have functions $f_2$ and $f_3$ in $H^p_{\alpha,\beta}$ such that $Tf_2=z^2$ and $Tf_3=z^3$. It follows that 
 $\psi (f_2\circ\phi)=z^2$, $\psi (f_3\circ\phi)=z^3$, and  
\begin{equation}\label{mero}
\frac{f_3}{f_2 }(\phi(z))=z
\end{equation} 
for each non-zero $z\in \bb{D}$. This yields that $\phi$ is injective on $\bb{D}\setminus \{0\}$. Now suppose there is $0\neq a\in \bb{D}$ with $\phi(a)=\phi(0)$. Then 
$f_2(\phi(a))=f_2(\phi(0))$. Also, $\psi(a)f_2(\phi(a))=a^2$, $\psi(0)f_2(\phi(0))=0$, and $\psi(0)\ne 0.$ This shows that $a=0;$ therefore $\phi$ is a univalent map in $\bb{D}$.
	 	  
We next show that $\phi$ is inner. We prove it by contradiction. Assuming $\phi$ is not inner, we shall prove the existence of a point on the unit circle $\bb T$ and an open disc $D$ containing it with the property that every function in $H^p_{\alpha, \beta}$ admits an analytic extension to $\bb D\cup D$. However, for any given point on the unit circle, there exists a function in $H^p$, and hence in $H^p_{\alpha, \beta}$, that does not admit an analytic extension at that point. This yields a contradiction; hence, $\phi$ must be inner.  

Suppose $\phi$ is not inner. Let $\tilde{\phi}$ denote the boundary function of $\phi.$ Then there exists a subset of $\bb T$ of positive measure on which $|\tilde{\phi}(e^{i\theta})|<1$. 
This allows us to choose a $0<\delta<1$ such that the measure of the set $S:=\{e^{i\theta}\in \bb{T}: |\tilde{\phi}(e^{i\theta})|<\delta\}$ is also positive. Since $\phi$ is 
non-constant, $\tilde{\phi}(S)$ must be an infinite set. Moreover, this set is contained in a compact subset of 
$\bb D$. Therefore, $\tilde{\phi}(S)$ has a limit point in $\bb D$.  

Let $g$ be a cyclic function in $H^p_{\alpha,\beta}$. By passing to a subset of $S$ of full measure, we may assume that for each $e^{i\theta}\in S$, $\widetilde{Tg}(e^{i\theta})\in\mathbb C$ and the radial limits
\begin{equation}\label{mero15}
\tilde{\phi}(e^{i\theta}) = \lim_{r\to1^-}\phi(re^{i\theta}) \ \ \ \text{and} \ \ \  \widetilde{Tg}(e^{i\theta}) = \lim_{r\to1^-}Tg(re^{i\theta})
\end{equation}
exist, where $\widetilde{Tg}$ denotes the boundary function of $Tg$.

Let $Z$ be the zero set of $f_2$. Then $\mathbb D\setminus Z$ is open and connected. We claim that
$$
\tilde{\phi}(S)\subseteq \mathbb D\setminus Z.
$$

To prove this, let $e^{i\theta}\in S$. Then, 
$$
Tg(re^{i\theta}) = \psi(re^{i\theta})\,g(\phi(re^{i\theta})). 
$$

Thus, using (\ref{mero15}) together with the fact that $g\bigl(\tilde{\phi}(e^{i\theta})\bigr)\neq 0$ (because a cyclic function has no zeroes in $\mathbb D$), it follows that  
$ \lim\limits_{r\to1^-}\psi(re^{i\theta})$ exists and is finite. Next, consider 
$$
\psi(re^{i\theta})\,f_2(\phi(re^{i\theta})) = r^2e^{2i\theta}.
$$
Passing to the limit as $r\to1^-$, we obtain
$$
\left(\lim_{r\to1^-}\psi(re^{i\theta})\right) f_2\bigl(\tilde{\phi}(e^{i\theta})\bigr) = e^{2i\theta},
$$
which implies that $f_2\bigl(\tilde{\phi}(e^{i\theta})\bigr)\neq 0,$ that is, $\tilde{\phi}(e^{i\theta})\in \bb D\setminus Z$. Hence $\tilde{\phi}(S)\subseteq \bb D\setminus Z.$

We now consider the function   
$$
q:=\frac{f_3}{f_2 }
$$ 
that is meromorphic in $\bb D$. Let $W$ be the set of poles of $q$. Then $\bb D\setminus W$ is an open connected set on which $q$ is analytic. Moreover, since  $W\subseteq Z$ and $\tilde{\phi}(S)\subseteq \bb D\setminus Z$, we have $\tilde{\phi}(S)\subseteq \bb D\setminus W.$ In adddition, using (\ref{mero}), 
$$
q(\phi(re^{i\theta}))=re^{i\theta}
$$ 
for every 
$0< r<1$; consequently,  
\begin{equation}\label{mero2}
{q(\tilde{\phi}(e^{i\theta}))}=e^{i\theta}
\end{equation}
for each $e^{i\theta}\in S$. Accordingly, $\overline{\tilde{\phi}(S)}\subseteq \bb D\setminus W$. Therefore, the limit point of $\tilde{\phi}(S)$ in $\bb D$ belongs to $\bb D\setminus W.$ This, together with the facts that $q$ is non-constant on $\bb D\setminus W$ and $q^{\prime}$ is analytic on $\bb D\setminus W$, implies that $q^{\prime}$ must be non-zero at some point of $\tilde{\phi}(S)$. Suppose $e^{i\theta_0}\in S$ such 
that $q^{\prime}(\tilde{\phi}(e^{i\theta_0}))\ne 0$. Then there exists an open disc $D_0$ centered at $\tilde{\phi}(e^{i\theta_0})$ contained in $\bb D\setminus W$ such that 
$q: D_0\to q(D_0)$ admits a holomorphic inverse. Since $f_2(\tilde{\phi}(e^{i\theta_0}))\ne 0$, shrinking $D_0$ if necessary, we may assume that $f_2$ has no zero in $D_0$, 
while $q: D_0\to q(D_0)$ continues to admit a holomorphic inverse. Let us denote this inverse by $h: q(D_0)\to D_0.$ Then,
\begin{equation}\label{mero3}
h(q(z))=z
\end{equation}
for each $z\in D_0.$ Furthermore, using (\ref{mero2}), $e^{i\theta_0} = q(\tilde{\phi}(e^{i\theta_0}))\in q(D_0)$, which, due to analyticity of $q$, is an open set. Therefore, $q(D_0)$ contains an open disc $D_1$ centered at $e^{i\theta_0}$. This allows us to choose a positive number $r_0$ such that for every $r\ge r_0$, we have  $re^{i\theta_0}\in D_1$ and 
$\phi(re^{i\theta_0})\in D_0$. Now, using (\ref{mero}) and (\ref{mero3}), we deduce that 
$$
h(re^{i\theta_0})=h(q(\phi(re^{i\theta_0})))=\phi(re^{i\theta_0})
$$
for all $r\ge r_0$; therefore, $h=\phi$ on the entire $\bb D\cap D_1.$ Additionally, $h(D_1)\subseteq D_0\subseteq \bb D.$ Hence, the function $\phi_1:\bb D\cup D_1\to \bb D$ 
given by 
$$
\phi_1(z)=
\begin{cases}
\phi (z) & \ \ {\rm for} \ \ \ z\in\bb D\\
h(z) & \ \ {\rm for} \ \ \ z\in D_1
\end{cases}
$$
defines an analytic extension of $\phi$ from $\bb{D}$ to $\bb{D}\cup D_1$.  
	  
Further, $h(D_1)\subseteq D_0\subseteq \bb D\setminus Z$ yields that $\frac{z^2}{f_2\circ h}$ is analytic in $D_1$. Moreover, 
$f_2\circ h=f_2\circ \phi$ on $\bb D\cap D_1$ and $\psi (f_2\circ \phi)=z^2$. Consequently, the function 
$\psi_1:\bb D\cup D_1\to \bb D$ defined by
$$
\psi_1(z) = 
\begin{cases}
\frac{z^2}{f_2(h(z))} & {\rm for} \ \ \  z\in D_1\\
\psi(z) & {\rm for} \ \ \ z\in\bb D  
\end{cases}
$$
provides an analytic extension of $\psi$ to $\bb D\cup D_1$. Hence, $\phi$ and $\psi$ each have analytic extensions, namely $\phi_1$ and $\psi_1$, to $\bb D\cup D_1.$ Together with the surjectivity of $T$, this implies that every function $f\in H^p_{\alpha,\beta}$ admits an analytic extension to $\bb{D}\cup D_1$. However, this is not the case. This contradiction proves that $\phi$ must be inner.

In conclusion, we have shown that $\phi$ is a univalent inner function; therefore, it must be an automorphism of the disc $\bb D.$ Let 
$\phi(z)=\lambda\frac{z-a}{1-\overline{a}z}$ for some $a\in\mathbb{D}$ and $\lambda\in \bb T$. We shall prove that $a=0.$ To this end, note that  
$\psi(f\circ\phi)\in H^p_{\alpha,\beta}$ for every $f\in H^p_{\alpha,\beta}$. It follows that $\psi\phi^n\in H^p_{\alpha, \beta}$ for all $n\ge 2$. Then, using Lemma \ref{Cha}, we have  
$$
\left(\beta\psi(0) - \alpha \psi^{\prime}(0)\right)\phi^{n}(0)=n\alpha\psi(0)\phi'(0)\phi^{n-1}(0)
$$ for all $n\ge 2$. This forces $\phi(0)=0$, since $\psi(0)\ne 0$ and $\phi^{'}(0)=\lambda(1-|a|^2)\ne 0$. Therefore, $a=0$, and hence $\phi(z)=\lambda z$.

Lastly, it remains to prove that $\psi$ is invertible in $H^\infty$. To begin with, we show that $\psi\in H^\infty.$ Let $f\in H^p$, and define $g(z)=z^2f(\bar{\lambda}z)$. Then $g\in H^p_{\alpha,\beta}$, and hence $Tg\in H^p_{\alpha,\beta}$. A direct computation shows that 
$$
Tg=\lambda^2 z^2\psi f.
$$ 
Thus, $\psi f\in H^p$. It follows that $\psi H^p\subset H^p$; therefore, $\psi \in H^\infty$.  

To prove that $\psi$ is invertible in $H^\infty$, we use the surjectivity of $T.$ Arguing as above, one obtains that for every $f\in H^p$, there exists $h\in H^p$ such that 
$f=\psi h.$ This shows that $\frac{f}{\psi}\in H^p$, and hence $\frac{1}{\psi}H^p\subseteq H^p$. Consequently, $\frac{1}{\psi}\in H^\infty$. Therefore, $\psi$ is invertible in 
$H^\infty$, which completes the proof.
\end{proof}   

\begin{remark} As noted earlier, Theorem \ref{weighted2} is an analog of \cite[Theorems 2.3 and 3.3]{MR2015}. We highlight two aspects in which its conclusion for the spaces $H^p_{\alpha,\beta}$ differs from the corresponding results in \cite{MR2015}. The results in \cite{MR2015} show that such preservers are weighted composition operators of the form 
$$
f\mapsto \psi (f\circ \phi)
$$ 
where $\phi$ is an automorphism of $\bb D$ and $\psi$ is an invertible element of the multiplier algebra of the underlying space. In contrast, Theorem \ref{weighted2} shows that for $H^p_{\alpha,\beta}$ spaces, the automorphism $\phi$ is necessarily a rotation, rather than an arbitrary automorphism of $\bb D$. Furthermore, the theorem asserts that $\psi$ is an invertible function in $H^\infty$, and $\psi$ need not belong to $H^\infty_1$, the multiplier algebra of $H^p_{\alpha,\beta}$. We illustrate this latter fact with the following example.
\end{remark}

\begin{ex}
Take $\phi(z)=-z$ and $\psi(z)=\sqrt{2}+z$. Then, for $\alpha = 2\sqrt{2}/3$ and $\beta=1/3$, it is a straightforward to verify that the map $T(f)=\psi(z)f(-z)$ is a well-defined one-to-one continuous linear operator on $H^2_{\alpha,\beta}$.  

Note that any $g\in H^2_{\alpha, \beta}$ can be decomposed as 
$$
g(z)=\frac{c}{3}(2\sqrt{2}+z)+z^2g_1(z)
$$
for some scalar $c$ and $g_1\in H^2$.  Further, observe that 
$$
\frac{1}{\sqrt{2}-z}=\frac{\sqrt{2}+z}{2}+z^2h
$$
for some $h\in H^\infty.$ Then, it follows that 
$$
\frac{g(-z)}{\sqrt{2}-z}= \frac{\sqrt{2}c}{6}(2\sqrt{2}+z)+z^2g_2
$$ 
for some $g_2\in H^2$. Therefore, $f(z)=\frac{g(-z)}{\sqrt{2}-z}$ belongs to $H^2_{\alpha, \beta}$. Also, $T(f)=g$, which establishes that $T$ is surjective. Hence, 
$T$ is invertible; thus, satisfy the hypothesis of Theorem \ref{weighted2}. However, note that $\psi$ is invertible in $H^\infty$ but it does not belong to $H^\infty_1.$
\end{ex}

\subsection*{Acknowledgements} The authors thank the Mathematical Sciences Foundation, Delhi, for the support and facilities needed to complete the present work. The first named author thanks the University Grants Commission(UGC), India, for the support, and the second named author thanks Shiv Nadar Institution of Eminence for partially supporting this research.

\end{document}